\newcommand{\REM}[1]{\relax}
\newcommand{\Card}{\mathsf{Card}}
\newcommand*{\Log}{\mathop{\operator@font Log}\nolimits}
\newcommand*{\Arg}{\mathop{\operator@font Arg}\nolimits}
\newcommand*{\tg}{\mathop{\operator@font tg}\nolimits}
\newcommand*{\ctg}{\mathop{\operator@font ctg}\nolimits}
\newcommand*{\cosec}{\mathop{\operator@font cosec}\nolimits}
\newcommand*{\arctg}{\mathop{\operator@font arctg}\nolimits}
\newcommand*{\arcctg}{\mathop{\operator@font arcctg}\nolimits}
\newcommand*{\sh}{\mathop{\operator@font sh}\nolimits}
\newcommand*{\ch}{\mathop{\operator@font ch}\nolimits}
\begin{document}
\numberwithin{equation}{section}

\newcommand{\anglim}{\angle\lim}

\newcommand{\de}{\partial}
\newcommand{\Hol}{{\sf Hol}}
\newcommand{\Aut}{{\sf Aut}(\mathbb D)}
\newcommand{\loc}{L^1_{\rm{loc}}}
\newcommand{\Moeb}{\mathsf{M\ddot ob}}

\newcommand{\Mobius}{M\"obius }

\def\Re{{\sf Re}\,}
\def\Im{{\sf Im}\,}

\newcommand{\UH}{\mathbb{H}}
\newcommand{\UHi}{\mathbb{H}_i}
\newcommand{\Real}{\mathbb{R}}
\newcommand{\Natural}{\mathbb{N}}
\newcommand{\Complex}{\mathbb{C}}
\newcommand{\ComplexE}{\overline{\mathbb{C}}}
\newcommand{\Int}{\mathbb{Z}}
\newcommand{\UD}{\mathbb{D}}
\newcommand{\UC}{{\partial\UD}}
\newcommand{\clS}{\mathcal{S}}
\newcommand{\U}{{\mathfrak U}}
\newcommand{\Ut}[1]{{\mathfrak U\hskip.1em}_{#1}\hskip-.09em}
\newcommand{\UF}{{\mathfrak U\hskip.08em}}
\newcommand{\Us}{{\mathfrak U\hskip.03em}}
\newcommand{\Un}{{\mathfrak U\hskip.07em}_0\hskip-.07em}
\newcommand{\Uone}{{\mathfrak U\hskip.05em}_1\hskip-.07em}

\newcommand{\Maponto}
{\xrightarrow{\scriptstyle \!\mathsf{onto}\,}}

\let\N=\Natural
\let\R=\Real
\let\D=\UD
\let\RS=\ComplexE
\let\C=\Complex
\newcommand{\uH}{\mathbb{H}_{\mathrm{i}}}
\newcommand{\rH}{\mathbb{H}_1}
\newcommand{\mscrm}{\mathfrak{m}}
\newcommand{\mscrc}{\mathfrak{c}}
\newcommand{\mathup}[1]{\mathrm{#1}}

\newcommand{\uHc}{\overline{\mathbb{H}}_{\mathrm{i}}}

\newcommand{\Step}[2]{\begin{itemize}\item[{\bf Step~#1.}]\textit{#2}\end{itemize}}
\newcommand{\step}[2]{\begin{itemize}\item[\textit{Step\,#1:}]\textit{#2}\end{itemize}}
\newcommand{\case}[2]{\begin{itemize}\item[\textit{Case~#1:}]\textit{#2}\end{itemize}}
\newcommand{\caseM}[1]{\par\vskip.7ex\noindent\textit{Case~#1}.~}
\newcommand{\caseU}[2]{\vskip1.4ex\begin{trivlist}%
\item[\textbf{CASE~#1}:]\textbf{{\mathversion{bold}#2}}\end{trivlist}\vskip-0.3ex}
\newcommand{\parM}[1]{\par\vskip.7ex\noindent\textit{#1}.~}
\newcommand{\proofbox}{\hfill$\Box$}
\newcommand{\trivlistcorr}{\leftmargin=2.3em\labelsep=0em\labelwidth=2.3em\rightmargin=.5em\itemindent=0em}
\newcommand{\trivlistcorrM}{\leftmargin=2.3em\labelsep=0.5em\labelwidth=1.8em\rightmargin=.5em\itemindent=0em}
\newcommand{\claim}[2]{\vskip1.4ex\begin{list}{}{\trivlistcorr}\item[#1\hfill] {\it #2}\end{list}\vskip.5ex}
\newcommand{\claimM}[2]{\vskip1.4ex\begin{list}{}{\trivlistcorrM}\item[\textbf{Claim #1}:~] {\it #2}\end{list}}

\newcommand{\STOP}{\par\hbox to\textwidth{\color{red}\leaders\hbox{\,STOP\,}\hfil}\par}

\newcommand{\mcite}[1]{\csname b@#1\endcsname}

\newcommand{\di}{\mathrm{d}}
\newcommand{\DI}{\,\di}
\newcommand{\ddt}[1]{\frac{\di #1}{\di t}}
\newcommand{\DDT}{\ddt{}\,}

\newcommand{\diam}{\mathop{\mathsf{diam}}\nolimits}

\theoremstyle{theorem}
\newtheorem {result} {Theorem}
\setcounter {result} {64}
 \renewcommand{\theresult}{\char\arabic{result}}



\newcommand{\Spec}{\Lambda^d}
\newcommand{\SpecR}{\Lambda^d_R}
\newcommand{\Prend}{\mathrm P}




\def\cn{{\C^n}}
\def\cnn{{\C^{n'}}}
\def\ocn{\2{\C^n}}
\def\ocnn{\2{\C^{n'}}}
\def\je{{\6J}}
\def\jep{{\6J}_{p,p'}}
\def\th{\tilde{h}}


\def\dist{{\rm dist}}
\def\const{{\rm const}}
\def\rk{{\rm rank\,}}
\def\id{{\sf id}}
\def\aut{{\sf aut}}
\def\Aut{{\sf Aut}}
\def\CR{{\rm CR}}
\def\GL{{\sf GL}}
\def\Re{{\sf Re}\,}
\def\Im{{\sf Im}\,}

\def\Cap{\mathop{\mathrm{cap}}}

\def\la{\langle}
\def\ra{\rangle}

\newcommand{\sgn}{\mathop{\mathrm{sgn}}}

\emergencystretch15pt \frenchspacing

\newtheorem{theorem}{Theorem}
\newtheorem{lemma}{Lemma}[section]
\newtheorem{proposition}{Proposition}
\newtheorem{corollary}{Corollary}

\theoremstyle{definition}
\newtheorem{definition}[lemma]{Definition}
\newtheorem{example}[lemma]{Example}

\theoremstyle{remark}
\newtheorem{remark}[lemma]{Remark}
\numberwithin{equation}{section}

\newcommand{\res}{\mathop{\mathrm{\,r\,e\,s\,}}\limits}

\newcommand{\distCE}{\mathop{\mathsc{dist}_\ComplexE}}

\newenvironment{mylist}{\begin{list}{}%
{\labelwidth=2em\leftmargin=\labelwidth\itemsep=.4ex plus.1ex
minus.1ex\topsep=.7ex plus.3ex
minus.2ex}%
\let\itm=\item\def\item[##1]{\itm[{\rm ##1}]}}{\end{list}}

\title[Parametric representations and boundary fixed points]{Parametric representations and boundary fixed points of univalent self-maps of the unit disk}
\author[P. Gumenyuk]{Pavel Gumenyuk\,$^*$}
\address{Department of Mathematics and Natural Sciences, University of Stavanger, N-4036 Stavanger, Norway.}
\email{pavel.gumenyuk@uis.no}

\keywords{Parametric Representation, univalent function, conformal mapping, boundary fixed point, Loewner equation, Loewner-Kufarev equation, infinitesimal generator, evolution family, Lie semigroup}

\subjclass[2010]{Primary: 30C35, 30C75;  Secondary: 30D05, 30C80, 34H05, 37C25}

\thanks{$^*$Partially supported by {\it Ministerio de Econom\'\i{}a y Competitividad} (Spain) project
\hbox{MTM2015-63699-P}.}

\begin{abstract}
A classical result in the theory of Loewner's parametric representation states that
the semigroup~$\Us_*$ of all conformal self-maps $\phi$ of the unit disk $\D$ normalized by $\phi(0) = 0$ and
$\phi'(0) > 0$ can be obtained as the reachable set of the Loewner\,--\,Kufarev control system
$$
\frac{\di w_t}{\di t}=G_t\circ w_t,\quad t\geqslant0,\qquad w_0=\id_\UD,
$$
where the control functions $t\mapsto G_t\in\Hol(\UD,\C)$ form a certain convex cone.
Here we extend this result to the semigroup $\UF[F]$ consisting of all conformal~$\phi:\UD\to\UD$ whose set of boundary regular fixed points contains a given finite set $F\subset\UC$ and to its subsemigroup $\Ut\tau[F]$ formed by $\id_\UD$ and all~$\phi\in\UF[F]\setminus\{\id_\UD\}$ with the prescribed boundary Denjoy\,--\,Wolff point~$\tau\in\UC\setminus F$. This completes the study launched in~\cite{Gumenyuk_parametric}, where the case of interior Denjoy\,--\,Wolff point~$\tau\in\UD$ was considered.
\end{abstract}

\maketitle

\tableofcontents

\let\ge=\geqslant
\let\le=\leqslant

\section{Introduction}
\noindent One of the cornerstone results of Loewner's parametric representation can be stated in the following form, see, e.g. \cite[Problem~3 on p.\,164]{Pommerenke} or \cite[pp.\,69-70]{Aleksandrov}.
\begin{result}\label{TH_classical}
A function $\phi$ defined in the unit disk~$\UD:=\{z\colon|z|<1\}$ is a univalent (i.e. injective and holomorphic) self-map of~$\UD$ normalized by $\phi(0) = 0$, $\phi'(0) > 0$, if and only if it can be represented in the form $\phi(z)=w_z(T)$ for all~$z\in\UD$, where $T\ge0$ and $w=w_z(t)$ is the unique solution to initial value problem
\begin{equation}\label{EQ_classical-LK-ODE}
\frac{dw(t)}{dt}=-w(t)\,p\big(w(t),t\big),\quad t\ge0;\qquad w(0)=z,
\end{equation}
with a function $p:\UD\times[0,+\infty)\to\C$ holomorphic in the first argument, measurable in the second argument and satisfying $\Re p(\cdot,t)>0$ and $p(1,t)=1$ for a.e.~$t\ge0$.
\end{result}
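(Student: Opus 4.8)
\medskip
\noindent The plan is to establish the two implications separately: the ``if'' part by a direct study of the Cauchy problem~\eqref{EQ_classical-LK-ODE}, and the ``only if'' part by realizing $\phi$ as a transition map of a suitable Loewner chain.

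\emph{Sufficiency.} Fix a function $p$ as in the statement and some $T\ge0$. First I would invoke Carathéodory's existence--uniqueness theorem for ordinary differential equations: for each fixed $z\in\UD$ the right-hand side $(w,t)\mapsto-w\,p(w,t)$ of~\eqref{EQ_classical-LK-ODE} is holomorphic in~$w$ and measurable in~$t$, and, by the standard Herglotz estimate, $|w\,p(w,t)|$ is bounded on $\{|w|\le r\}$ uniformly in~$t$ for every $r<1$; hence the initial value problem has a unique locally absolutely continuous solution $t\mapsto w_z(t)$, and the a~priori bound obtained next shows it is defined for all $t\ge0$. Indeed, for a.e.\ $t$,
$$
\ddt{}\,|w_z(t)|^2=2\,\Re\!\big(\overline{w_z(t)}\,\dot w_z(t)\big)=-2\,|w_z(t)|^2\,\Re p\big(w_z(t),t\big)\le0,
$$
so $|w_z(t)|\le|z|<1$ and the function $\phi(z):=w_z(T)$ maps $\UD$ into $\UD$. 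It is holomorphic because the solution of~\eqref{EQ_classical-LK-ODE} depends holomorphically on the initial value (apply Vitali's theorem to the difference quotients in~$z$), and it is injective because integrating~\eqref{EQ_classical-LK-ODE} backward from time~$T$ shows that $w_{z_1}(T)=w_{z_2}(T)$ implies $z_1=z_2$. Finally $w\equiv0$ solves~\eqref{EQ_classical-LK-ODE} with $z=0$, so $\phi(0)=0$; and $\lambda(t):=\partial_zw_z(t)\big|_{z=0}$ satisfies $\dot\lambda=-p(0,t)\,\lambda$, $\lambda(0)=1$, whence $\phi'(0)=\lambda(T)=\exp\!\big(-\!\int_0^Tp(0,s)\DI s\big)$, a number in $(0,1]$ by the normalization of~$p$.

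\emph{Necessity.} Let $\phi$ be a univalent self-map of $\UD$ with $\phi(0)=0$ and $\phi'(0)>0$. By the Schwarz lemma $0<\phi'(0)\le1$, with equality precisely when $\phi=\id_\UD$ (then take $T=0$); so assume $\phi\neq\id_\UD$ and set $T:=-\log\phi'(0)>0$. The idea is to build a Loewner chain joining $\phi$ to $\id_\UD$: a family of simply connected domains $(\Omega_t)_{0\le t\le T}$ with $0\in\Omega_s\subseteq\Omega_t$ whenever $s\le t$, with $\Omega_0=\phi(\UD)$ and $\Omega_T=\UD$, depending continuously on~$t$ in the sense of Carathéodory kernel convergence, and with conformal radius at~$0$ equal to $e^{t-T}$; then the normalized Riemann maps $f_t\colon\UD\Maponto\Omega_t$, $f_t(0)=0$, $f_t'(0)=e^{t-T}>0$, satisfy $f_0=\phi$ and $f_T=\id_\UD$. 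By the classical theorem that every Loewner chain obeys the Loewner\,--\,Kufarev partial differential equation, $(f_t)$ satisfies $\partial_tf_t(z)=z\,f_t'(z)\,p(z,t)$ with a Herglotz control $p$ (holomorphic in~$z$, measurable in~$t$, with $\Re p>0$ and the normalization of the statement). A direct computation (differentiating $f_t(w_z(t))=f_0(z)$ in~$t$) then shows that $w_z(t):=f_t^{-1}\!\big(f_0(z)\big)$, a well-defined holomorphic self-map of $\UD$ since $f_s(\UD)\subseteq f_t(\UD)$, solves~\eqref{EQ_classical-LK-ODE}, and that $w_z(T)=f_T^{-1}\!\circ f_0(z)=\phi(z)$, which is the required representation.

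\emph{The main difficulty.} The delicate step is constructing the interpolating family $(\Omega_t)$, since $\phi(\UD)$ can be an arbitrary simply connected proper subdomain of $\UD$ with very irregular boundary. The usual remedy is to handle first the ``nice'' maps --- e.g.\ those with $\overline{\phi(\UD)}\subset\UD$, or with $\partial\phi(\UD)$ an analytic Jordan curve --- for which $(\Omega_t)$ can be exhibited explicitly by growing the domain continuously out to~$\UD$, and then to reach a general~$\phi$ by approximation: pick ``nice'' $\phi_n\to\phi$ locally uniformly with $\phi_n(0)=0$ and $\phi_n'(0)>0$, take representations $(p_n,T_n)$, and extract a limit using weak-$*$ compactness of the probability measures in the Herglotz representations of the $p_n(\cdot,t)$ (a Helly/diagonal argument) together with the Carathéodory kernel theorem; the care needed is in controlling the time parametrization uniformly in~$n$ and in checking that the limit control still has positive real part for a.e.~$t$.
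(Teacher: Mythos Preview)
The paper does not prove this statement. Theorem~\ref{TH_classical} is quoted in the Introduction as a classical result, with explicit references to Pommerenke's book \cite[Problem~3 on p.\,164]{Pommerenke} and Aleksandrov's monograph \cite[pp.\,69--70]{Aleksandrov}; it serves only as background motivation for the paper's own theorems. There is therefore no proof in the paper to compare your proposal against.

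For what it is worth, your outline follows the standard route found in those references. The sufficiency argument is sound as sketched. For necessity, the classical treatments typically take as the dense ``nice'' class the single-slit mappings (for which the interpolating family $(\Omega_t)$ is obtained simply by retracting the slit) rather than analytic-boundary maps, and then pass to a general $\phi$ by a Helly-type selection on the Herglotz representing measures --- exactly the mechanism you describe. One small point: your computation $\phi'(0)=\exp\!\big(-\!\int_0^T p(0,s)\DI s\big)\in(0,1]$ tacitly reads the normalization as $p(0,t)=1$; the paper literally prints $p(1,t)=1$, but that is almost certainly a misprint for $p(0,t)=1$, so your interpretation is the intended one.
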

A related result, see, e.g. \cite[\S6.1]{Pommerenke}, stating that \textit{the class $\clS$ of all univalent mappings $f:\UD\to\C$ normalized by~$f(0)=f'(1)-1=0$ coincides with the set of all functions representable as $$f(z)=\lim_{t\to+\infty}e^t w_z(t),$$ where $w_z$, as above, is the solution to~\eqref{EQ_classical-LK-ODE},} is much better known and usually meant when one talks about \textsl{Loewner's parametric representation} of univalent functions. However, it is the former version of Loewner's parametric method, i.e. Theorem~\ref{TH_classical}, which will play more important role in the frames of the present study.

There is no natural linear structure compatible with the property of injectivity. In fact, the class $\clS$ mentioned above even fails to be a convex subset of~$\Hol(\UD,\C)$. A similar statement holds for the class $\Us_*$ of all univalent self-maps $\phi:\UD\to\UD$, $\phi(0)=0$, $\phi'(0)>0$, involved in Theorem~\ref{TH_classical}.

That is why in many problems for univalent functions, standard variation and optimization methods cannot be applied directly. One has to use a suitable parametric representation to replace univalent mappings by a ``parameter'', such as the driving function~$p$ in~\eqref{EQ_classical-LK-ODE}, ranging in a convex cone. In this way, the parametric method has been used a lot in the study of the class~$\clS$, see, e.g. \cite{Aleksandrov}, \cite[Sect.\,6]{BCMV_survey}, and references therein.
We also mention recent studies \cite{JuliaSeb,DVK,OliverSeb}, which make essential use of Theorem~\ref{TH_classical} and its analogue for hydrodynamically normalized univalent self-maps of the upper half-plane. Finally, it is worth mentioning that the univalence comes to de Branges' proof of Bieberbach's famous conjecture~\cite{deBranges} solely via a slight modification of Theorem~\ref{TH_classical}.

In contrast to linear operations, the operation of composition preserves univalence. With this fact being one of the cornerstones of Loewner's parametric method,  univalent self-maps of $\UD$ come to the scene. So let $\mathfrak S$ be a set of univalent maps $\phi:\UD\to\UD$ containing~$\id_\UD$ and closed w.r.t. composition, i.e. satisfying $\psi\circ\phi\in\mathfrak S$ whenever $\phi,\psi\in\mathfrak S$. The following {\it heuristic scheme} of parametric representation of~$\mathfrak S$ goes back to Loewner~[\mcite{Loewner}\,--\,\mcite{LoewnerMonotone}].

First we consider the set $\mathrm T\mathfrak S$ of all infinitesimal generators of one-parameter (sub)semi\-groups in $\mathfrak S$, see Section~\ref{SS_one-param} for precise definitions. Suppose that under a certain condition on a family $[0,+\infty)\ni t\mapsto G\in\mathrm T\mathfrak S$, the \hbox{time-$T$} map $\varphi_T^G$ of the non-autonomous ODE $dw/dt=G_t(w)$, $w(0)=z\in\UD$, belongs to~$\mathfrak S$  for any $T\ge0$. Consider the set $\mathfrak S^{L}$ of all \hbox{time-$T$} maps obtained in this way (when neither $T\ge0$ nor the family $(G_t)$ is fixed).
 
 To establish a {\it Loewner-type parametric representation} of~$\mathfrak S$ means to find an appropriate condition on families~$(G_t)$ that guarantees existence of a unique \hbox{time-$T$} map $\varphi_T^G\in\mathfrak S$ for all $T\ge0$, and to show that~$\mathfrak S^L=\mathfrak S$. This approach to parametric representation of univalent self-maps has been systematically developed in~[\mcite{Goryainov}\,--\,\mcite{Goryainov-Kudryavtseva}].

For the study of holomorphic self-maps $\phi:\UD\to\UD$, in particular from the view point of dynamics, an important role is played by the so-called boundary (regular) fixed points, see, e.g. \cite{MilnVas, AlexanderClarkMeasures, ContrMadrigal:AnaFlows, CDP1, CDP2, CowenPommerenke, Frolova, GoryainovFr, PommVas}, just to mention some studies on this topic. Combining Loewner's scheme of parametric representation with the intrinsic approach in Loewner Theory proposed in~\cite{BCM1,BCM2},  we will establish Loewner-type parametric representation for semigroups of univalent self-maps ${\phi:\UD\to\UD}$ defined by prescribing a finite set of points on~$\UC$ which are fixed by~$\phi$.

To conclude the Introduction, let us mention that Loewner's scheme makes perfect sense also for univalent self-maps of the unit ball and polydisk in~$\C^n$.  Loewner-type parametric representations for univalent functions in~$\UD$ make essential use of the Riemann Mapping Theorem, which is not available in several complex variables. By this reason, even for simplest choices of $\mathfrak S$, e.g.,  for the semigroup consisting of all univalent self-maps $\phi$ of the unit ball $\mathbb B^n:=\{z\in\C^n:\|z\|<1\}$, $n>1$, satisfying $\phi(0)=0$, to describe the reachable set $\mathfrak S^L$ is a hard open problem. It seems that the only known necessary condition for $\phi$ to be an element of~$\mathfrak S^L$ is that the domain $\phi(\mathbb B^n)$ has to be Runge in~$\C^N$, see~\cite[Section~4]{Erlend}.

\section{Main results}\label{S_results}
Denote by $\U$ the semigroup formed by all univalent holomorphic mappings ${\phi:\UD\to\UD}$. Further, for $\tau\in\overline\UD$, by $\Ut\tau$ we denote the subsemigroup of~$\U$ formed by $\id_\UD$ and all ${\phi\in\U\setminus\{\id_\UD\}}$ for which $\tau$  is the Denjoy\,--\,Wolff point, see Definition~\ref{DF_DW-point}.
Main result of this paper is the analogue of the classical Theorem~\ref{TH_classical} for two families of subsemigroups in~$\U$ and in~$\Ut\tau$ defined below.

Given a finite set $F\subset\UC$, by $\UF[F]$ we denote the subsemigroup of~$\U$ consisting all ${\phi\in\U}$ for which every element of $F$ is a boundary regular fixed point, see Definitions~\ref{DF_boundaryFP} and~\ref{DF_BRFP}. Let $\Ut\tau[F]:=\Ut\tau\cap\UF[F]$ for any $\tau\in\overline\UD\setminus F$.

Note that the semigroups defined above are always non-trivial, i.e. different from~$\{\id_\UD\}$, see, e.g. \cite[Example~3.1]{Gumenyuk_parametric}.

\begin{theorem}\label{TH_main}
For any finite set $F\subset \UC$ and any $\tau\in\overline\UD\setminus F$, the semigroups $\mathfrak S=\UF[F]$ and $\mathfrak S=\Ut\tau[F]$ admit Loewner-type parametric representation, i.e.
there exists a convex cone $\mathcal M_{\mathfrak S}$ of Herglotz vector fields in~$\UD$  with the following properties:
 \begin{mylist}
 \item[(i)] for every $G\in\mathcal M_{\mathfrak S}$ and a.e.~$s\ge0$,  $G(\cdot,s)$ is the infinitesimal generator of a one-parameter semigroup in~$\mathfrak{S}$;
\item[(ii)] for every $\phi\in\mathfrak S$ there exists $G\in\mathcal M_{\mathfrak S}$ such that $\phi(z)=w^G_{z,0}(1)$ for all~${z\in\UD}$, where $w=w^G_{z,s}(t)$ is the unique solution to the initial value problem
    \begin{equation*}
      \frac{dw(t)}{dt}=G\big(w(t),t\big),\quad w(s)=z,\qquad z\in\UD,\quad t\ge s\ge0;
    \end{equation*}
\item[(iii)]  for every $G\in\mathcal M_{\mathfrak S}$, the mappings $z\mapsto w_{z,s}^G(t)$, $t\ge s\ge0$, belong to~$\mathfrak S$.
\end{mylist}
\end{theorem}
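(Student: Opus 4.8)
The plan is to follow the \emph{intrinsic} approach to Loewner Theory developed in \cite{BCM1,BCM2} and already applied in \cite{Gumenyuk_parametric} for the interior Denjoy\,--\,Wolff case, adapting it to the boundary situation. The cone $\mathcal M_{\mathfrak S}$ will be defined as the set of all Herglotz vector fields $G(\cdot,s)$ such that, for a.e.~$s\ge0$, the vector field $G(\cdot,s)$ is the infinitesimal generator of a one-parameter semigroup contained in~$\mathfrak S$ (and, for $\mathfrak S=\Ut\tau[F]$, additionally having $\tau$ as its Denjoy\,--\,Wolff point); convexity of $\mathcal M_{\mathfrak S}$ will follow once we know that the set $\mathrm T\mathfrak S$ of such infinitesimal generators is a convex cone in $\Hol(\UD,\C)$, which in turn reduces to an explicit description of generators of semigroups in~$\UF[F]$ (resp. $\Ut\tau[F]$) via the Berkson\,--\,Porta-type representation together with the known characterization of boundary regular fixed points of generators in terms of the nontangential limit and the angular derivative $G'(\sigma)=\anglim_{z\to\sigma}G(z)/(z-\sigma)\in\Real$. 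Item~(i) then holds by definition.

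For item~(iii) --- that each time-$t$ map $w^G_{z,s}(t)$ lies in $\mathfrak S$ --- I would argue as follows. First, general Loewner theory (evolution families of Herglotz vector fields) gives that $w_{z,s}(t)$ is a well-defined univalent self-map of~$\UD$ for all $t\ge s\ge0$, so it belongs to~$\U$. To see that each point $\sigma\in F$ remains a boundary regular fixed point, I would fix $\sigma\in F$ and study the real-valued quantity $t\mapsto \beta(t):=\log\big((w_{s,t})'(\sigma)\big)$ (the logarithm of the angular derivative at~$\sigma$), showing it satisfies an absolutely continuous differential inequality driven by $G'(\sigma,\cdot)\in\loc$; since each $G(\cdot,t)$ generates a semigroup fixing $\sigma$ with finite angular derivative, $G'(\sigma,t)$ is locally integrable, and this forces $(w_{s,t})'(\sigma)<+\infty$, i.e. $\sigma$ is a BRFP of $w_{s,t}$. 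For $\mathfrak S=\Ut\tau[F]$ one must additionally check that $\tau$ is the Denjoy\,--\,Wolff point of $w_{s,t}$; this follows because $\tau$ is the common DW-point of the generators, so $G(\cdot,t)$ has the Berkson\,--\,Porta form with $\Re\big(\overline{(z-\tau)}\, \tau\, G(z,t)\big)\le 0$-type sign condition, which propagates to the flow via a Gronwall/monotonicity argument on the hyperbolic (or horocyclic) distance to~$\tau$.

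Item~(ii) --- surjectivity of the representation --- is where the real work lies, and it is the step I expect to be the main obstacle. The strategy is: given $\phi\in\mathfrak S$, one must produce a Herglotz vector field $G\in\mathcal M_{\mathfrak S}$ whose time-$1$ evolution equals~$\phi$. The natural approach is to embed $\phi$ into a \emph{Loewner chain of self-maps}, i.e. to construct an evolution family $(\varphi_{s,t})$ with $\varphi_{0,1}=\phi$ such that every $\varphi_{s,t}\in\mathfrak S$ and the associated Herglotz vector field has $G(\cdot,t)\in\mathrm T\mathfrak S$ for a.e.~$t$. In the classical case this is done by a ``stochastic'' or factorization argument (writing $\phi$ as a limit of compositions of maps close to the identity, each of which is essentially the time-$\varepsilon$ map of a generator); here one must do this \emph{while preserving all the boundary fixed points in $F$ and the DW-point~$\tau$}. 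The key technical tools are: (a) the fact that $\UF[F]$ and $\Ut\tau[F]$ are closed under the relevant limiting operations and that the maps $\varphi_{s,t}:=\varphi_{0,t}\circ(\varphi_{0,s})^{-1}$ inherit membership in~$\mathfrak S$ (using that $F$-fixing and having $\tau$ as DW-point are preserved under composition and under taking the ``quotient'' $\psi$ with $\varphi_{0,t}=\psi\circ\varphi_{0,s}$, which requires the dilation/Julia--Wolff--Carathéodory inequality at each $\sigma\in F$); and (b) a regularity argument showing that the generated Herglotz vector field indeed has, for a.e.~$t$, a generator fixing each $\sigma\in F$ with finite angular derivative, so that $G(\cdot,t)\in\mathrm T\mathfrak S$. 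I would carry out the construction of the chain by the standard Loewner device of iterating the representation $\phi=\lim_{n\to\infty}\phi_{n,1}\circ\cdots\circ\phi_{n,n}$ with each factor in a small neighborhood of $\id_\UD$ inside $\mathfrak S$, then pass to the limiting evolution family, and finally verify that the boundary data are preserved in the limit using the lower semicontinuity of the angular derivative together with the a priori bound from item~(iii). The subtle point, and the crux of the whole argument, is controlling the angular derivatives at the points of $F$ uniformly along this approximation so that they neither blow up nor degenerate, which is exactly what distinguishes the boundary case treated here from the interior case of \cite{Gumenyuk_parametric}.
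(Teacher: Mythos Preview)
Your overall architecture --- define the cone, verify (i) by construction, deduce (iii) from properties of generators, and reduce (ii) to an embedding of $\phi$ into an evolution family lying in $\mathfrak S$ --- matches the paper's. However, there are two genuine gaps.

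\textbf{Gap in the definition of $\mathcal M_{\mathfrak S}$ and in (iii).} You claim that ``since each $G(\cdot,t)$ generates a semigroup fixing $\sigma$ with finite angular derivative, $G'(\sigma,t)$ is locally integrable''. This is false: pointwise finiteness of $\lambda(\sigma,t):=\anglim_{z\to\sigma}G(z,t)/(z-\sigma)$ for a.e.\ $t$ does not imply $\lambda(\sigma,\cdot)\in L^1_{\mathrm{loc}}$, and without this integrability one cannot conclude that $\varphi_{s,t}'(\sigma)<\infty$. The paper (see Remark~\ref{RM_about-the-convex-cone}) builds the integrability condition into the \emph{definition} of $\mathcal M_{\mathfrak S}$, and then (iii) follows by quoting \cite[Theorem~1.1]{BRFPLoewTheory}. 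Your cone is too large.

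\textbf{Gap in (ii).} Your plan to obtain the evolution family by writing $\phi$ as a limit of long compositions of near-identity elements of $\mathfrak S$, and then passing to the limit, cannot be carried out as stated. First, $\UF[F]$ is \emph{not} closed under locally uniform limits: the angular derivative at $\sigma\in F$ is only lower semicontinuous, so a limit of maps in $\UF[F]$ may have $\phi'(\sigma)=\infty$. Second, and more fundamentally, it is not at all clear that a given $\phi\in\Ut\tau[F]$ admits a nontrivial factorization $\phi=\psi_2\circ\psi_1$ with $\psi_1,\psi_2\in\Ut\tau[F]$; producing such factorizations is precisely the hard part. The paper does \emph{not} use any approximation or compactness argument here. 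Instead it proves a separate embedding theorem (Theorem~\ref{TH_embedd}): it builds an explicit $n$-parameter ``inclusion chain'' $(g_a)_{a\in[0,1]^n}$ interpolating between $\phi$ and $\id_\UD$ (one parameter per complementary arc of $F$ on $\UC$), and then selects a path $\theta\mapsto a(\theta)$ through this cube along which (a) the preimages $g_{a(\theta)}^{-1}(\sigma_j)$ stay equal to $\sigma_j$, and (b) $g_{a(\theta)}'(\tau)$ is monotone. Step~(a) guarantees that the resulting $\varphi_{s,t}$ lie in $\UF[F]$; step~(b) guarantees that $\tau$ remains the Denjoy\,--\,Wolff point. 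The monotonicity needed for these selections comes from a Loewner-type lemma for self-maps with boundary Denjoy\,--\,Wolff point (Lemma~\ref{LM_Loewner-type}), proved via the Nevanlinna representation. None of this machinery appears in your proposal, and the ``standard Loewner device'' you invoke does not supply it.
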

Definitions and basic theory regarding Herglotz vector fields, infinitesimal generators, and one-parameter semigroups can be found in  Section~\ref{S_prelim}.

\begin{remark}\label{RM_about-the-convex-cone}
The convex cone $\mathcal M_{\mathfrak S}$ in Theorem~\ref{TH_main} can be characterized explicitly. For  ${\mathfrak S=\UF[F]}$, it coincides with the set of all Herglotz vector fields $G$ such that:\\[-2ex]
\begin{mylist}
\item[(a)]~$G$ satisfies condition~(i) in Theorem~\ref{TH_main}, which, due to~\cite[Theorem~1]{CDP2}, is equivalent to the existence of a finite angular limit
   $
      \lambda(\sigma,s):=\anglim_{z\to\sigma}G(z,s)/({z-\sigma})
    $
    for a.e.~${s\ge0}$ and every $\sigma\in F$;
\item[(b)] the functions $\lambda(\sigma,\cdot)$, $\sigma\in F$, are all locally integrable on~$[0,+\infty)$.
\end{mylist}

The case $\mathfrak S=\Ut\tau[F]$ is similar. The only difference is that in (a), in addition to having a finite angular limit $\lambda(\sigma,s)$ for a.e.~$s\ge0$ and every $\sigma\in F$, we should require that for a.e.~$s\ge0$, $G(\cdot,s)$~is the infinitesimal generator of a one-parameter semigroup~$(\phi_t^{G(\cdot,s)})$ for which $\tau$ is the Denjoy\,--\,Wolff point.
\end{remark}

For $\Un[\{1\}]$, Theorem~\ref{TH_main} was proved by Goryainov~\cite{GoryainovBRFP}. For~$\Ut\tau[F]$ with an arbitrary finite set $F\subset\UC$ and $\tau\in\UD$, it has been proved by the author in~\cite{Gumenyuk_parametric}. Moreover, the case of $\UF[F]$ with $\Card(F)\le3$ and that of $\Ut\tau[F]$ with $\Card(F)\le2$ and $\tau\in\UC$ are also covered in~\cite{Gumenyuk_parametric}. The proof in the remaining cases is presented in this paper. It is based on the following theorem.

\begin{theorem}\label{TH_embedd}
Let $F\in\UC$ be any finite set with $\Card(F)\ge3$ and let $\tau\in\UC\setminus F$. Then for any $\phi\in\Ut\tau[F]$ there exists an evolution family $(\varphi_{s,t})\subset\Ut\tau[F]$ such that $\phi=\varphi_{0,1}$.
\end{theorem}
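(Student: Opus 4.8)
The plan is to realize an arbitrary $\phi\in\Ut\tau[F]$ as the time-one map of an evolution family that stays inside $\Ut\tau[F]$, by first splitting off the behaviour at the boundary fixed points and then embedding the ``normalized'' remainder. Concretely, the first step is to use the assumption $\Card(F)\ge 3$ together with $\tau\notin F$: pick three distinct points of $F\cup\{\tau\}$ and build, via the intrinsic approach of \cite{BCM1,BCM2}, a convenient ``anchor'' one-parameter semigroup (equivalently, its Herglotz vector field) that already has all the prescribed boundary regular fixed points with the right spectral values. Here the point is that a conformal self-map with three prescribed boundary fixed points is rigid enough that the correction factor $\lambda(\sigma,\cdot)$ at each $\sigma\in F$ is pinned down by $\phi$ up to an admissible integrable perturbation; this is exactly the content of Remark~\ref{RM_about-the-convex-cone}(a)--(b) combined with \cite[Theorem~1]{CDP2}.

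Second, I would reduce to a case that is \emph{already known}. The paper states that $\Ut\tau[F]$ for $\tau\in\UD$ (any finite $F$) and $\Ut\tau[F]$ for $\tau\in\UC$ with $\Card(F)\le 2$ are both settled in \cite{Gumenyuk_parametric}. So the strategy is: conjugate $\phi$ by a suitable automorphism or, better, use a continuity/deformation argument that moves the Denjoy\,--\,Wolff point, or that absorbs all but two of the fixed points into a factor that is itself embeddable. Precisely, write $\phi=\psi\circ\chi$ where $\chi$ carries the ``extra'' fixed points $F\setminus\{\sigma_1,\sigma_2\}$ and $\psi\in\Ut\tau[\{\sigma_1,\sigma_2\}]$; apply the known embedding to $\psi$, embed $\chi$ separately (it has fewer constraints, or can be handled by the interior case after a Möbius change of variable since its relevant fixed-point data is simpler), and then concatenate the two evolution families. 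The technical heart is checking that the concatenation, and each factor, genuinely lands in $\Ut\tau[F]$ — i.e.\ that the boundary regular fixed points and the Denjoy\,--\,Wolff point are preserved along the whole flow, not just at the endpoints. This is where one invokes the characterization of $\mathcal M_{\Ut\tau[F]}$ from Remark~\ref{RM_about-the-convex-cone} and the fact that membership in $\Ut\tau[F]$ is a ``convex-cone on the generator side'' condition, so it passes to solutions of the Loewner\,--\,Kufarev ODE by property~(iii) of Theorem~\ref{TH_main} once that theorem's cone is in hand for the smaller pieces.

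The main obstacle I anticipate is the \emph{splitting step}: given $\phi$ with $\Card(F)\ge 3$ prescribed boundary regular fixed points and a prescribed Denjoy\,--\,Wolff point $\tau\in\UC$, factor it as a composition of self-maps each of which carries a strictly smaller fixed-point set while keeping $\tau$ as the Denjoy\,--\,Wolff point of each factor. The existence of such a factorization is not obvious — one must produce an intermediate conformal self-map $\chi$ that (i) fixes exactly the points one wants, with admissible spectral values dominated by those of $\phi$, and (ii) has $\tau$ as Denjoy\,--\,Wolff point, and such that $\phi\circ\chi^{-1}$ remains univalent with the complementary fixed-point data. I would try to construct $\chi$ explicitly by prescribing its infinitesimal generator as a rational Herglotz vector field with simple poles at the chosen $\sigma$'s and at $\tau$, tuning residues so that the time-one map has exactly the required boundary data and so that the ``ratio'' with $\phi$ stays inside $\U$; verifying that the ratio is again univalent (rather than merely holomorphic) is the delicate inequality. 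Once the factorization and the preservation-along-the-flow claims are secured, Theorem~\ref{TH_embedd} follows by concatenating the evolution families supplied by the already-established cases and reparametrizing time so that $\phi=\varphi_{0,1}$.
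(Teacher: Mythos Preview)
Your proposal has a genuine gap at exactly the point you yourself flag as the ``main obstacle'': the splitting $\phi=\psi\circ\chi$. The difficulty is structural, not merely technical. If $\psi\in\Ut\tau[\{\sigma_1,\sigma_2\}]$ does not fix the remaining points of~$F$, then for $\psi\circ\chi$ to fix some $\sigma\in F\setminus\{\sigma_1,\sigma_2\}$ one needs $\psi(\chi(\sigma))=\sigma$ with, in general, $\chi(\sigma)\neq\sigma$; hence $\chi\notin\UF[F]$ either. Consequently neither the evolution family embedding~$\chi$ nor the one embedding~$\psi$ lies in $\Ut\tau[F]$, and their concatenation cannot lie in $\Ut\tau[F]$: at every intermediate time the maps simply fail to fix all of~$F$. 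Conversely, if you insist that both factors belong to $\Ut\tau[F]$, you have not reduced $\Card(F)$ and the known cases from~\cite{Gumenyuk_parametric} no longer apply. Your fallback --- build $\chi$ as the time-one map of a rational generator and check that $\phi\circ\chi^{-1}$ is a univalent self-map of~$\UD$ with the desired data --- does not resolve this: even granting univalence on $\chi(\UD)$, there is no mechanism producing a univalent extension to all of~$\UD$, and the Denjoy\,--\,Wolff point of the quotient is not controlled by those of $\phi$ and~$\chi$. (The aside about passing to the interior case ``after a M\"obius change of variable'' cannot help either: automorphisms of~$\UD$ keep~$\UC$ invariant, so a boundary Denjoy\,--\,Wolff point stays on the boundary.)

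The paper's proof takes a completely different route and involves no factorization of~$\phi$. It constructs (Lemma~\ref{LM_multiparam_family}) an $n$-parameter \emph{inclusion chain} $(g_a)_{a\in[0,1]^n}$ of univalent self-maps with $g_{(0,\dots,0)}=\phi$, $g_{(1,\dots,1)}=\id_\UD$, and $g_a(\UD)\subset g_b(\UD)$ whenever $a\le b$ componentwise; one parameter is attached to each open arc of~$\UC$ between consecutive points of $F\cup\{\tau\}$. The desired evolution family is then $\varphi_{s,t}:=g_{a(\theta(t))}^{-1}\circ g_{a(\theta(s))}$ for a suitable monotone path $\theta\mapsto a(\theta)$ through the cube. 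The substantive work (Lemmas~\ref{LM_monotonic}, \ref{LM_Loewner-type}, \ref{LM_pass-to-one-parameter}) is to choose this path so that the contact points $g_{a(\theta)}^{-1}(\sigma_j)$ stay equal to~$\sigma_j$ for every~$j$ and so that $\theta\mapsto g_{a(\theta)}'(\tau)$ is non-decreasing; the first condition forces every $\varphi_{s,t}$ to fix~$F$, and the second forces $\tau$ to remain the Denjoy\,--\,Wolff point throughout. The parameter elimination is carried out recursively, and the required monotonicity of the contact-point positions is supplied by a Loewner-type inequality (Lemma~\ref{LM_Loewner-type}) proved via the Nevanlinna representation in the half-plane. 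In short, rather than decomposing $\phi$ into simpler self-maps, the argument deforms $\phi(\UD)$ to~$\UD$ through intermediate domains in a way that keeps all $n+1$ boundary constraints satisfied simultaneously.
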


The proofs of Theorems 1 and 2 are given in Section~\ref{S_proofs}. Sections~\ref{S_AC-measure} and~\ref{S_lemmas} contain some lemmas and other auxiliary statements, and in the next section we recall some basic definitions and facts from Holomorphic Dynamics and  Loewner Theory in the unit disk used throughout the paper.

\section{Preliminaries}\label{S_prelim}
\subsection{Contact and fixed points of holomorphic self-maps.}
Let $\phi\in\Hol(\UD,\UD)$. By the Schwarz Lemma, $\phi$ can have at most one fixed point in~$\UD$. However, in the sense of angular limits, there can be many (boundary) fixed points on $\UC$.
\begin{definition}\label{DF_boundaryFP}
A point $\sigma\in\UC$ is said to be a \textsl{boundary fixed point} of~$\phi$ if the angular limit $\anglim_{z\to\sigma}\phi(z)$ exists and coincides with~$\sigma$. More generally, $\sigma\in\UC$ is a \textsl{contact point} of~$\phi$ if it is a boundary fixed point of~$e^{i\theta}\phi$ for a suitable $\theta\in\Real$.
\end{definition}
 Following a usual convention,  if $\anglim_{z\to\xi}\phi(z)$ exists for some~$\xi$,  it will be denoted, in what follows, by $\phi(\xi)$ and  $\phi'(\xi)$ will stand for the angular derivative of~$\phi$ at~$\xi$, i.e. $\phi'(\xi):=\anglim_{z\to\xi}(\phi(z)-\phi(\xi))/(z-\xi)$, again provided that the latter limit exists.

 It is known that for any contact point $\sigma$ of a holomorphic self-map $\phi:\UD\to\UD$, the angular derivative $\phi'(\sigma)$ exists, finite or infinite, and if it is finite, then $\sigma\phi'(\sigma)/\phi(\sigma)>0$; in particular, $\phi'(\sigma)>0$ or $\phi'(\sigma)=\infty$ at any boundary fixed point~$\sigma$ of~$\phi$. See, e.g. \cite[Theorem 1.2.7 on p.\,53]{Abate} or \cite[Proposition~4.13 on p.\,82]{Pommerenke2}.

 \begin{definition}\label{DF_BRFP}
 A boundary fixed point (or a contact point) $\sigma$ of a holomorphic self-map ${\phi:\UD\to\UD}$ is said to be \textsl{regular} if the angular derivative $\phi'(\sigma)$ is finite.
 \end{definition}

Among all  fixed points of a self-map $\phi\in\Hol(\UD,\UD)\setminus\{\id_\UD\}$ there is one point of special importance for dynamics. Namely, by the classical Denjoy\,--\,Wolff Theorem, see, e.g. \cite[Theorem~1.2.14, Corollary~1.2.16,
Theorem~1.3.9]{Abate}, $\phi$ has a unique (boundary or interior) fixed point $\tau\in\overline\UD$ such that $|\phi'(\tau)|\le1$. Moreover, the sequence of iterates $(\phi^{\circ n})$, $\phi^{\circ 1}:=\phi$, $\phi^{\circ(n+1)}:=\phi\circ\phi^{\circ n}$, converges (to the constant function equal) to~$\tau$ locally uniformly in~$\UD$, unless $\phi$ is an elliptic automorphism.

\begin{definition}\label{DF_DW-point}
The point~$\tau$ above is referred to as the \textsl{Denjoy\,--\,Wolff point} of~$\phi$.
\end{definition}

\begin{remark}\label{RM_ChainRule}
A  version of the \textsl{Chain Rule for angular derivatives} states that if $\sigma$ is a contact point of~$\phi\in\Hol(\UD,\UD)$ and $\omega:=\phi(\sigma)$ is a contact point of $\psi\in\Hol(\UD,\UD)$, then $\sigma$ is a contact point of $\psi\circ\phi$ and $\big(\psi\circ\phi\big)'(\sigma)=\psi'(\omega)\cdot\phi'(\sigma)$; in particular, $\sigma$ is a regular contact point of $\psi\circ\phi$ provided $\sigma$ and $\omega$ are regular for~$\phi$ and $\psi$, respectively. See, e.g. \cite[Lemma~1.3.25 on p.\,92]{Abate}. It follows that the classes of univalent holomorphic self-maps defined in the Introduction, $\UF[F]$ and $\Ut\tau[F]$, are semigroups w.r.t. the operation~${(\psi,\phi)\mapsto\psi\circ\phi}$.
\end{remark}

\subsection{One-parameter semigroups  in $\Hol(\UD,\UD)$}\label{SS_one-param}
An important role in this study is played by the time-continuous analogue of interation, represented by one-parameter semigroups in $\Hol(\UD,\UD)$. For further details and the proofs of the results stated in this subsection we refer the reader to~\cite{ContrMadrigal:AnaFlows,CDP1} and eventually to references cited there; see also \cite[Chapter 3]{Shoikhet:2001}.
\begin{definition}
A family $(\phi_t)_{t\ge0}\subset\Hol(\UD,\UD)$ is said to be a \textsl{one-parameter semigroup} if $t\mapsto\phi_t$ is a continuous semigroup homomorphism from the $\big([0,+\infty),\cdot+\cdot\big)$ with the usual Euclidian topology to $\big(\Hol(\UD,\UD),\cdot\circ\cdot\big)$ endowed with the topology of locally uniform convergence in~$\UD$.

Equivalently, $(\phi_t)$ is a one-parameter semigroup if $\phi_0=\id_\UD$, $\phi_{t+s}=\phi_t\circ\phi_s=\phi_s\circ\phi_t$ for any
$t,s\ge0$, and if $\phi_t(z)\to z$ as $t\to0^+$ for any $z\in\UD$.
\end{definition}

All elements of a one-parameter semigroup $(\phi_t)$ different from~$\id_\UD$ have the same Denjoy\,--\,Wolff  point and the same set of boundary fixed points. Moreover, if a boundary fixed point $\sigma$ is regular for \textit{some} of such $\phi_t$'s, then $\sigma$ is regular for all~$\phi_t$'s.

\begin{remark}\label{RM_Berkson-Porta}
It is known~\cite{Berkson-Porta} that for any one-parameter semigroup $(\phi_t)$ the limit
$$
G(z):=\lim_{t\to0^+}\frac{\phi_t(z)-z}{t}
$$
exists for all~$z\in\UD$ and it is a holomorphic function in~$\UD$, referred to as the \textsl{infinitesimal generator} of~$(\phi_t)$. Moreover, $G$ admits the following representation
\begin{equation}\label{EQ_Berkson-Porta}
G(z)=(\tau-z)(1-\overline\tau z)p(z),\quad z\in\UD,
\end{equation}
where $\tau\in\overline\UD$ and $p\in\Hol(\UD,\C)$ with $\Re p\ge0$ are determined by~$(\phi_t)$ uniquely unless $\phi_t=\id_\UD$ for all~$t\ge0$: namely, $\tau$ is the Denjoy\,--\,Wolff point of each $\phi_t$ different from~$\id_\UD$.
Furthermore, for any $z\in\UD$, the function $w=w_z(t):=\phi_t(z)$, $t\ge0$, is the unique solution to the initial value problem
\begin{equation}\label{EQ_BP-IVP}
\frac{dw(t)}{dt}=G(w(t)),\quad t\ge0;\qquad w(0)=z.
\end{equation}
Conversely, if $G\in\Hol(\UD,\C)$ for any $z\in\UD$ the unique solution to \eqref{EQ_BP-IVP} extends to all~${t\ge0}$ (i.e., in other words, $G$ is a holomorphic semicomplete autonomous vector field in $\UD$), then $G$ is of the form~\eqref{EQ_Berkson-Porta} and hence corresponds via~\eqref{EQ_BP-IVP} to a uniquely defined one-parameter semigroup, which we will call \textsl{generated by}~$G$ and will denote by~$(\phi_t^G)$.
\end{remark}
Representation~\eqref{EQ_Berkson-Porta} is known as the \textsl{Berkson\,--\,Porta formula}. 

\subsection{Basics of modern Loewner Theory}
An elementary, but important consequence of Remark~\ref{RM_Berkson-Porta} is that all elements of any one-parameter semigroup in $\Hol(\UD,\UD)$ are~{\it univalent}. However, it is known, see, e.g. \cite{GESR}, that not every univalent $\phi:\UD\to\UD$ is an element of a one-parameter semigroup.  According to Loewner's idea, in order to embed a given univalent self-map~$\phi$ into a semiflow, one should consider a \textit{non-autonomous} version of~\eqref{EQ_BP-IVP}, i.e.
\begin{equation}\label{EQ_LK-ODE}
\frac{dw(t)}{dt}=G\big(w(t),t\big),\quad t\ge s\ge 0;\qquad w(s)=z\in\UD.
\end{equation}
But for which class of functions $G:\UD\times[0,+\infty)$ are the corresponding (non-autonomous) semiflows of~\eqref{EQ_LK-ODE} defined globally, i.e. for any $z\in\UD$ and any $t\ge s\ge0$?
Attempts to answer this question have led to the following definition~\cite[Section~4]{BCM1}.

\begin{definition}\label{DF_HVF}
A function $G:\mathbb{D}\times[0,+\infty)\to \mathbb{C}$ is called a \textsl{Herglotz vector field (in the unit disk)} if it satisfies the following conditions:

\begin{mylist}
\item[HVF1.] For every $z\in\mathbb{D}$, the function $[0,+\infty)\ni t\mapsto G(z,t)$ is measurable.

\item[HVF2.] For a.e. $t\in[0,+\infty),$ the function $\mathbb{D}\ni z\mapsto G_t(z):=G(z,t)$ is an infinitesimal generator, i.e. $G_t$ admits the Berkson\,--\,Porta representation~\eqref{EQ_Berkson-Porta}.

\item[HVF3.] For any compact set $K\subset\mathbb{D}$ and any $T>0$ there
exists a non-negative locally integrable function $k_{K,T}$ on~$[0,+\infty)$ such that $|G(z,t)|\le k_{K,T}(t)$ for all $z\in K$ and a.e.~$t\in[0,T].$
\end{mylist}
\end{definition}

 In what follows, we will assume that $G$ in~\eqref{EQ_LK-ODE} is a Herglotz vector field. In such a case  by \cite[Theorem\,4.4]{BCM1}, for any $z\in\UD$ and any~$s\ge0$, the initial value problem~\eqref{EQ_LK-ODE} has a unique solution $w=w^G_{z,s}(t)$ defined for all~$t\ge s$; and the differential equation in that problem is called the \textsl{(generalized) Loewner\,--\,Kufarev ODE}. Equation~\eqref{EQ_classical-LK-ODE} in Theorem~\ref{TH_classical} and equation~\eqref{EQ_BP-IVP} related to one-parameter semigroups are its special cases. (The former equation is often referred as the \textsl{(classical radial) Loewner\,--\,Kufarev ODE} or simply as the \textsl{radial Loewner ODE.})

 Similarly to one-parameter semigroups, the class of all families $(\varphi_{s,t})_{t\ge s\ge0}$ which are (non-autonomous) semiflows of Herglotz vector fields~$G$, in the sense that $\varphi_{s,t}(z)=w^G_{z,s}(t)$ for all $z\in\UD$, all $s\ge0$, and all $t\ge s$, can be characterized intrinsically. Namely, according to \cite[Theorem 1.1]{BCM1}, initial value problem~\eqref{EQ_LK-ODE} defines a one-to-one correspondence between Herglotz vector fields~$G$ and evolution families~$(\varphi_{s,t})$, with the latter concept defined as follows.

\begin{definition}[\protect{\cite[Definition~3.1]{BCM1}}]\label{DF_evol_family}
A family $(\varphi_{s,t})_{t\ge s\ge0}\subset\Hol(\UD,\UD)$ is called an {\sl evolution
family (in the unit disk)} if it satisfies the following conditions:
\begin{mylist}
\item[EF1.] $\varphi_{s,s}=\id_{\mathbb{D}}$ for any~$s\ge0$.

\item[EF2.] $\varphi_{s,t}=\varphi_{u,t}\circ\varphi_{s,u}$ whenever $0\le
s\le u\le t$.

\item[EF3.] For all $z\in\mathbb{D}$ and  $T>0$ there exists an
integrable function $k_{z,T}:[0,T]\to[0,+\infty)$
such that
\[
|\varphi_{s,u}(z)-\varphi_{s,t}(z)|\le\int_{u}^{t}k_{z,T}(\xi)\DI\xi
\]
whenever $0\le s\le u\le t\le T.$
\end{mylist}
\end{definition}

Now we give a precise definition of what we mean by a Loewner-type parametric representation.
\begin{definition}[\protect{\cite[Definition~2.6]{Gumenyuk_parametric}}]\label{DF_Loewner-type_representation}
We say that a subsemigroup~$\mathfrak{S}\subset\Hol(\UD,\UD)$ \textsl{admits Loewner-type parametric representation} if there exists a convex cone $\mathcal M_{\mathfrak S}$ of Herglotz vector fields in~$\UD$ with the following properties:
\begin{mylist}
\item[LPR1.] For every $G\in\mathcal M_{\mathfrak S}$, we have that $G_t:=G(\cdot,t)$ for a.e.~$t\ge0$ is the generator of a one-parameter semigroup in~$\U$.

\item[LPR2.] The evolution family~$(\varphi^G_{s,t})$ of any~$G\in\mathcal M_{\mathfrak S}$  satisfies $(\varphi^G_{s,t})\subset\mathfrak S$.

\item[LPR3.] For every $\phi\in\mathfrak S$ there exists $G\in\mathcal M_{\mathfrak S}$ such that $\phi=\varphi^G_{s,t}$ for some $s\ge0$ and~$t\ge s$, where $(\varphi^G_{s,t})$ stands, as above, for the evolution family of the Herglotz vector field~$G$.
\end{mylist}
\end{definition}
Although the above definition can be stated without using concept of evolution family, we will take essential advantage of the correspondence between Herglotz vector fields and evolution families. Namely, in the proof of Theorem~\ref{TH_main}, to check  condition LR3  we will use Theorem~\ref{TH_embedd} and results from~\cite{Gumenyuk_parametric} to construct a suitable evolution family containing~$\phi$ and then prove that its Herglotz vector field belongs to~$\mathcal M_{\mathfrak S}$.

The convex cone $\mathcal M_{\mathfrak S}$ defined in Remark~\ref{RM_about-the-convex-cone} has the property that if $G\in\mathcal M_{\mathfrak S}$, then $(z,t)\mapsto G(z,at+b)$ also belongs to~$\mathcal M_{\mathfrak S}$ for any~$a>0$ and any~$b\ge0$. This fact allows us to replace, in Theorem~\ref{TH_main}, the condition~$\phi=\varphi_{s,t}$ from LPR3 by condition~$\phi=\varphi_{0,1}$.

The interplay between evolution families and Herglotz vector fields is completed by constructing related one-parameter families of conformal maps~$f_t:\UD\to\C$, $t\ge0$, known as Loewner chains.
\begin{definition}[\protect{\cite[Definition 1.2]{SMP}}]\label{DF_L-ch}
A family $(f_t)_{t\ge0}\subset\Hol(\UD,\C)$  is said to be a {\sl Loewner chain (in the unit disk)} if the following three conditions hold:
\begin{enumerate}
\item[LC1.] Each function $f_t:\D\to\C$ is univalent.

\item[LC2.] $f_s(\D)\subset f_t(\D)$ whenever $0\le s < t$.

\item[LC3.] For any compact set $K\subset\mathbb{D}$,  there exists a
non-negative locally integrable function $k_{K}:[0,+\infty)\to\Real$ such that
\[
|f_s(z)-f_t(z)|\leq\int_{s}^{t}k_{K}(\xi)d\xi
\]
for all $z\in K$ and all $s,t\ge0$ with $t\ge s$.
\end{enumerate}
\end{definition}

We will use the fact, see \cite[Theorem~1.3]{SMP}, that given a Loewner chain $(f_t)$,  the functions~$\varphi_{s,t}:=f_t^{-1}\circ f_s$, $t\ge s\ge 0$, form an evolution family, which is said to be \textsl{associated with} the Loewner chain~$(f_t)$.

\section{Regular contact points and Alexandrov\,--\,Clark measure}\label{S_AC-measure}
 The following theorem is a slight reformulation of the characterization for existence of finite angular derivative given in~\cite[p.\,52--53]{Sarason}.
\begin{result}\label{TH_Sarason}
Let $\psi\in\Hol(\UD,\UD)$ and let $\mu$ be the Alexanderov\,--\,Clark measure of~$\psi$ at~$1$, i.e.
$$
\frac{1+\psi(z)}{1-\psi(z)}=\int_\UC\frac{\sigma+z}{\sigma-z}\,\di\mu(\sigma)\,+\,iC,\quad\text{for all~$z\in\UD$},\qquad C:=\Im\frac{1+\psi(0)}{1-\psi(0)}.
$$
Then for any $\sigma_0\in\UC$, the following two conditions are equivalent:
\begin{itemize}
\item[(i)]  $\sigma_0$ is a regular contact point of~$\psi$ and $\psi(\sigma_0)\neq 1$;
\item[(ii)] the function $\UC\ni\sigma\mapsto|\sigma-\sigma_0|^{-2}$ is $\mu$-integrable.
\end{itemize}
\end{result}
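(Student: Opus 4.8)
The plan is to phrase everything in terms of the Herglotz function $F:=(1+\psi)/(1-\psi)$, which maps $\UD$ holomorphically into the right half-plane (we may assume $\psi$ is non-constant, since for a constant $\psi\in\UD$ both (i) and (ii) fail). From $\Re\frac{\sigma+z}{\sigma-z}=\frac{1-|z|^2}{|\sigma-z|^2}$ for $|\sigma|=1$ and $\Re F=\frac{1-|\psi|^2}{|1-\psi|^2}$, the representation defining $\mu$ gives the single identity on which the whole proof rests:
\begin{equation*}
\frac{1-|\psi(z)|^2}{(1-|z|^2)\,|1-\psi(z)|^2}\;=\;\int_{\UC}\frac{\di\mu(\sigma)}{|\sigma-z|^2},\qquad z\in\UD .
\end{equation*}
I will evaluate both sides along the radius $z=r\sigma_0$, $r\to1^-$, and combine this with the Julia\,--\,Wolff\,--\,Carath\'eodory theorem (see, e.g., \cite{Abate}) in the following form: $\sigma_0$ is a regular contact point of $\psi$ if and only if $d(\sigma_0):=\liminf_{r\to1^-}\bigl(1-|\psi(r\sigma_0)|\bigr)/(1-r)<\infty$, and in that case $\psi$ has angular limit $\omega_0\in\UC$ at $\sigma_0$, the radial limits of $\bigl(1-|\psi(r\sigma_0)|\bigr)/(1-r)$ and of $\bigl(\psi(r\sigma_0)-\omega_0\bigr)/(r\sigma_0-\sigma_0)$ exist, and $|\psi'(\sigma_0)|=d(\sigma_0)>0$.

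For the implication (i)$\Rightarrow$(ii), assume $\sigma_0$ is a regular contact point with $\omega_0\neq1$. Along $z=r\sigma_0$ the quotient $(1-|\psi|^2)/(1-|z|^2)$ tends to $|\psi'(\sigma_0)|$ while $|1-\psi|^2\to|1-\omega_0|^2>0$, so the left-hand side of the identity converges to the finite number $L:=|\psi'(\sigma_0)|/|1-\omega_0|^2$. Since $r\mapsto|\sigma-r\sigma_0|^{-2}$ is continuous at $r=1$ with value $|\sigma-\sigma_0|^{-2}$, Fatou's lemma applied to the right-hand side yields $\int_{\UC}|\sigma-\sigma_0|^{-2}\di\mu\le\liminf_{r\to1^-}\int_{\UC}|\sigma-r\sigma_0|^{-2}\di\mu=L<\infty$, which is exactly (ii).

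For (ii)$\Rightarrow$(i) the starting point is the elementary estimate $|\sigma-r\sigma_0|\ge c\,\bigl(|\sigma-\sigma_0|+(1-r)\bigr)$, valid for all $\sigma\in\UC$ and $r\in[0,1]$ with an absolute constant $c>0$ (write $\sigma=e^{i\theta}\sigma_0$ and compare $|e^{i\theta}-r|^2=(1-r)^2+4r\sin^2(\theta/2)$ with $\bigl(2|\sin(\theta/2)|+(1-r)\bigr)^2$). Assume $I:=\int_{\UC}|\sigma-\sigma_0|^{-2}\di\mu<\infty$; note this already forces $\mu(\{\sigma_0\})=0$. Using the estimate together with $|1-\psi|^2\le4$, the identity gives $(1-|\psi(r\sigma_0)|^2)/(1-r^2)\le4\,c^{-2}I$ for all $r$, hence $d(\sigma_0)<\infty$, and by Julia\,--\,Wolff\,--\,Carath\'eodory $\sigma_0$ is a regular contact point of $\psi$ with some angular value $\omega_0\in\UC$. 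It remains to exclude $\omega_0=1$. The same estimate shows $\frac{1-r^2}{|\sigma-r\sigma_0|^2}\le\frac{1}{2c^2|\sigma-\sigma_0|}$, and $|\sigma-\sigma_0|^{-1}\in L^1(\mu)$ (it is bounded by $|\sigma-\sigma_0|^{-2}$ where $|\sigma-\sigma_0|<1$, and $\mu$ is finite), so dominated convergence applied to $\Re F(r\sigma_0)=\int_{\UC}\frac{1-r^2}{|\sigma-r\sigma_0|^2}\di\mu$ gives $\Re F(r\sigma_0)\to0$ as $r\to1^-$. On the other hand, if $\omega_0=1$ then $|1-\psi(r\sigma_0)|\sim|\psi'(\sigma_0)|(1-r)$ and $1-|\psi(r\sigma_0)|^2\sim2|\psi'(\sigma_0)|(1-r)$, whence $\Re F(r\sigma_0)=(1-|\psi|^2)/|1-\psi|^2\sim2/\bigl(|\psi'(\sigma_0)|(1-r)\bigr)\to+\infty$, a contradiction. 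Thus $\omega_0\neq1$, and (i) holds.

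I expect the two places requiring the most care to be the geometric comparison estimate for $|\sigma-r\sigma_0|$ (routine, but the constants must be tracked for the two domination arguments) and the precise appeal to the Julia\,--\,Wolff\,--\,Carath\'eodory theorem, in particular the upgrade from a bound on the radial $\liminf$ to the actual existence of $\psi'(\sigma_0)$ together with the radial asymptotics $\psi(r\sigma_0)-\omega_0\sim\psi'(\sigma_0)(r\sigma_0-\sigma_0)$ used in both halves; everything else reduces to Fatou's lemma or dominated convergence applied to the single displayed identity. Alternatively, the statement can be read off, after adjusting the normalization of $\mu$, from Sarason's treatment in \cite[pp.\,52--53]{Sarason}.
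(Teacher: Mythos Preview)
The paper does not prove this statement at all: it is stated as Theorem~\ref{TH_Sarason} with the preface ``The following theorem is a slight reformulation of the characterization for existence of finite angular derivative given in~\cite[p.\,52--53]{Sarason}'', and no argument is supplied. So there is no ``paper's own proof'' to compare against; the paper simply cites Sarason, which you yourself note in your final sentence.

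Your self-contained argument is correct. The identity
\[
\frac{1-|\psi(z)|^2}{(1-|z|^2)\,|1-\psi(z)|^2}=\int_{\UC}\frac{\di\mu(\sigma)}{|\sigma-z|^2}
\]
is indeed the right pivot: Fatou's lemma along the radius handles (i)$\Rightarrow$(ii) immediately, and for (ii)$\Rightarrow$(i) your two-step scheme---first bound the Julia quotient via $|\sigma-r\sigma_0|\ge c\bigl(|\sigma-\sigma_0|+(1-r)\bigr)$ to force regularity, then rule out $\omega_0=1$ by showing $\Re F(r\sigma_0)\to0$ via dominated convergence while $\omega_0=1$ would force $\Re F(r\sigma_0)\to+\infty$---is clean and complete. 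The only cosmetic point is that the constant-$\psi$ aside is unnecessary once one adopts the paper's convention $\psi\in\Hol(\UD,\UD)$ with open codomain, but it does no harm.
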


\begin{corollary}\label{CR_Sarason}
Let $\Psi\in\Hol(\UH,\UH)$ be given by
\begin{equation}\label{EQ_Nevanlinna0}
\Psi(\zeta)=\alpha+\beta \zeta+\int_{\Real}\frac{1+t\zeta}{t-\zeta}\,\DI\nu(t)\quad\text{for all~$\zeta\in\UH$,}
\end{equation}
where $\alpha\in\Real$ and $\beta\ge0$ are some constants and $\nu$ is a finite Borel measure on~$\Real$.
If~$x_0\in\Real$ is a regular contact point with $\Psi(x_0)\neq\infty$, then $\Real\ni t\mapsto |1+tx_0|/|t-x_0|$ is $\nu$-integrable and
\begin{equation*}
\Psi(x_0)=\alpha+\beta x_0+\int_{\Real}\frac{1+t x_0}{t-x_0}\,\DI\nu(t).
\end{equation*}
\end{corollary}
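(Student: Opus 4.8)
The plan is to transport everything to the unit disk by a Cayley transform and invoke Theorem~\ref{TH_Sarason}. Let $T(z):=i\,\frac{1+z}{1-z}$, a Möbius transformation carrying $\UD$ onto $\UH$ with $T(1)=\infty$ and $T^{-1}(\zeta)=\frac{\zeta-i}{\zeta+i}$, and set $\psi:=T^{-1}\circ\Psi\circ T\in\Hol(\UD,\UD)$. A short computation gives $\frac{1+\psi(z)}{1-\psi(z)}=-i\,\Psi\!\big(T(z)\big)$ for all $z\in\UD$. Writing $\zeta=T(z)$ in~\eqref{EQ_Nevanlinna0} and using the algebraic identities $-i\beta\zeta=\beta\,\frac{1+z}{1-z}$ and $\frac{1+t\zeta}{t-\zeta}=i\,\frac{\sigma(t)+z}{\sigma(t)-z}$, where $\sigma(t):=T^{-1}(t)=\frac{t-i}{t+i}$, one obtains
\[
\frac{1+\psi(z)}{1-\psi(z)}=-i\alpha+\beta\,\frac{1+z}{1-z}+\int_{\UC\setminus\{1\}}\frac{\sigma+z}{\sigma-z}\,\di\tilde\nu(\sigma),\qquad z\in\UD,
\]
where $\tilde\nu$ is the push-forward of $\nu$ under $T^{-1}\colon\Real\to\UC\setminus\{1\}$, a finite positive Borel measure on $\UC$ with $\tilde\nu(\{1\})=0$. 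By uniqueness of the Herglotz representation, the Alexandrov--Clark measure of $\psi$ at $1$ is $\mu=\beta\,\delta_1+\tilde\nu$ (and the constant in Theorem~\ref{TH_Sarason} is $C=-\alpha$).

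Next I would translate the hypotheses. Since $T$ and $T^{-1}$ are conformal on $\ComplexE$, they carry non-tangential approach regions onto non-tangential approach regions and hence preserve (regular) contact points; therefore $\sigma_0:=T^{-1}(x_0)\in\UC\setminus\{1\}$ is a regular contact point of $\psi$, and $\psi(\sigma_0)=T^{-1}\!\big(\Psi(x_0)\big)\neq T^{-1}(\infty)=1$ because $\Psi(x_0)\neq\infty$. Thus Theorem~\ref{TH_Sarason} applies to $\psi$ at $\sigma_0$ and gives that $\sigma\mapsto|\sigma-\sigma_0|^{-2}$ is $\mu$-integrable, in particular $\tilde\nu$-integrable. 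Pulling this back via $\sigma=T^{-1}(t)$ and using $|T^{-1}(t)-T^{-1}(x_0)|=\frac{2|t-x_0|}{|t+i|\,|x_0+i|}$, it follows that $t\mapsto\frac{1+t^2}{(t-x_0)^2}$ is $\nu$-integrable. Since $(1+tx_0)^2\le(1+t^2)(1+x_0^2)$ for real $t,x_0$ and $\nu$ is finite, $t\mapsto\frac{|1+tx_0|}{|t-x_0|}$ lies in $L^2(\nu)\subset L^1(\nu)$; this proves the first assertion and, along the way, $\nu(\{x_0\})=0$.

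For the value formula, recall that by hypothesis $\Psi(x_0)=\anglim_{\zeta\to x_0}\Psi(\zeta)$ exists and is finite. I would pass to this non-tangential limit in~\eqref{EQ_Nevanlinna0}: the linear part satisfies $\alpha+\beta\zeta\to\alpha+\beta x_0$, and the integral I rewrite, again via $\zeta=T(z)$, as $i\int_{\UC}\frac{\sigma+z}{\sigma-z}\,\di\tilde\nu(\sigma)$ with $z=T^{-1}(\zeta)\to\sigma_0$ non-tangentially. On any Stolz angle at $\sigma_0$ one has the standard bound $\big|\frac{\sigma+z}{\sigma-z}\big|\le\frac{2(1+M)}{|\sigma-\sigma_0|}$, and $|\sigma-\sigma_0|^{-1}$ is $\tilde\nu$-integrable by the previous step (on the compact set $\UC$ it is dominated by $2|\sigma-\sigma_0|^{-2}$); moreover $\tilde\nu(\{\sigma_0\})=0$. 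Dominated convergence then yields $\int_{\UC}\frac{\sigma+z}{\sigma-z}\,\di\tilde\nu(\sigma)\to\int_{\UC}\frac{\sigma+\sigma_0}{\sigma-\sigma_0}\,\di\tilde\nu(\sigma)$, which, after undoing the Cayley substitution, is exactly $\int_\Real\frac{1+t\zeta}{t-\zeta}\,\di\nu(t)\to\int_\Real\frac{1+tx_0}{t-x_0}\,\di\nu(t)$. Passing to the limit on both sides of~\eqref{EQ_Nevanlinna0} then gives $\Psi(x_0)=\alpha+\beta x_0+\int_\Real\frac{1+tx_0}{t-x_0}\,\di\nu(t)$, as claimed.

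The only genuinely delicate points are the two ``interchange'' steps: identifying the Alexandrov--Clark measure of $\psi$ correctly — in particular recognizing that the linear term $\beta\zeta$ in~\eqref{EQ_Nevanlinna0} contributes the atom $\beta\delta_1$ to $\mu$, so that $\mu$-integrability of $|\sigma-\sigma_0|^{-2}$ really does transfer to $\nu$ — and justifying the exchange of the non-tangential limit with the integral, for which the essential input is precisely the integrability conclusion of Theorem~\ref{TH_Sarason} together with the Stolz-angle estimate for the Herglotz kernel. The remaining computations (the two algebraic identities above, the modulus $|T^{-1}(t)-T^{-1}(x_0)|$, and the elementary inequality $(1+tx_0)^2\le(1+t^2)(1+x_0^2)$) are routine.
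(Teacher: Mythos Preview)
Your proof is correct and follows essentially the same route as the paper: conjugate by the Cayley map, identify the Alexandrov--Clark measure of $\psi$ with $\beta\delta_1+\tilde\nu$, apply Theorem~\ref{TH_Sarason} to obtain the integrability of $(1+t^2)/(t-x_0)^2$ (hence of $|1+tx_0|/|t-x_0|$), and then pass to the limit in~\eqref{EQ_Nevanlinna0} by dominated convergence. The only cosmetic difference is in the last step: the paper takes the vertical approach $\zeta=x_0+i\varepsilon$ directly in~$\UH$ and separates real and imaginary parts, whereas you work on the disk side with a Stolz-angle bound for the Herglotz kernel; both justifications of the limit--integral interchange are valid and rely on the same integrability input.
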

\begin{proof}
Write $\psi:=H^{-1}\circ\Psi\circ H$, where $H(z):=i(1+z)/(1-z)$ is the Cayley transform of~$\UD$ onto~$\UH$. Using the relation between the measure~$\nu$ and the Alexandrov\,--\,Clark measure~$\mu$ of~$\psi$ at~$1$, see, e.g. \cite[p.\,138]{Bhatia}, we immediately deduce from Theorem~\ref{TH_Sarason} that $t\mapsto|t-x_0|^{-2}$ is $\nu$-integrable. Since $\nu$ is finite, it follows that the functions $t\mapsto|1+tx_0|/|t-x_0|$, $t\mapsto |t|/|t-x_0|^2$ and $t\mapsto (1+t^2)/|t-x_0|^2$ are also $\nu$-integrable. It remains to separate the real and imaginary parts in~\eqref{EQ_Nevanlinna0} for $\zeta:=x_0+i\varepsilon$, $\varepsilon>0$, and use Lebesgue's Dominated Convergence Theorem to pass to the limit as $\varepsilon\to0^+$.
\end{proof}

\section{Lemmas}
\label{S_lemmas}
\REM{
\begin{lemma}\label{LM_rational-function}
Let $t_1\le t_2<t_3\le t_4$ be real numbers, with $(t_2-t_1)^2+(t_4-t_3)^2\neq0$. Then for any $t\in(t_2,t_3)$ and any~$t'\in\Real\setminus[t_2,t_3]$,
$$
Q(t):=\frac{(t-t_1)(t-t_4)}{(t-t_2)(t-t_3)}~>~Q(t').
$$
\end{lemma}
\begin{proof}
If $t_2=t_1$ or $t_4=t_3$, then $Q$ is a linear-fractional function and the desired conclusion follows from the fact that $Q$ is monotonic on each of the intervals $(-\infty,p)$ and $(p,+\infty)$, where $p$ stands for the unique pole of~$Q$.

Suppose now that $t_2\neq t_1$ and $t_4\neq t_3$.
Then $Q(t)\to+\infty$ as $(t_2,t_3)\ni t\to t_2$ and as $(t_2,t_3)\ni t\to t_3$. Hence the range of $Q$ on $(x_2,x_3)$ is of the form $[a_0,+\infty)$ and the equation $Q(t)=a$ has at least two roots in $(x_2,x_3)$ for any $a>a_0$ and at least one multiple root in $(x_2,x_3)$ if ${a=a_0}$. Taking into account that $Q$ is a rational function of degree two,  it follows that $Q\big(\ComplexE\setminus(t_2,t_3)\big)\cap[a_0,+\infty)=\emptyset$. Therefore, for any $t'\in\Real\setminus[t_2,t_3]$, we have $Q(t')<a_0$. The proof is complete.
\end{proof}}

Let us fix a finite set $F\subset\UC\setminus\{1\}$ with $n:=\Card(F)\ge3$. Write $F$ as a finite sequence $(\sigma_j)_{j=1}^n$ satisfying $0<\arg \sigma_1<\arg\sigma_2<\ldots\arg \sigma_n<2\pi$. Denote by $L_j$, $j=1,\ldots,n-1$, the open arc of~$\UC$ going from~$\sigma_j$ to $\sigma_{j+1}$ in the counter-clockwise direction and let $L_0$ stand for the arc between $\sigma_1$ and $\sigma_n$ containing the point~$1$.

Let $\phi\in\Uone[F]\setminus\{\id_\UD\}$. Denote by $\mathcal C(L)$  the set of all $\psi\in\Hol(\UD,\UD)$ that extends continuously to an open arc~$L\subset\UC$ with $\psi(L)\subset\UC$. By~\cite[Lemma~5.6(ii)]{Gumenyuk_parametric}, there is no $j=1,\ldots,n-1$ such that that $\phi\in\mathcal C(L_j)$. It may, however, happen that $\phi\in\mathcal C(L_0)$.

\begin{remark}\label{RM_contact-preim}
Suppose now that $f$ is any conformal mapping of~$\UD$ with $\phi(\UD)\subset f(\UD)\subset \UD$. Then for each~$j=1,\ldots,n$ there exists a unique regular contact point~$\xi_j$ of~$f$ such that $f(\xi_j)=\sigma_j$, see, e.g. \cite[Theorem~4.14 on p.\,83]{Pommerenke2}. Throughout the paper, we will use this statement implicitly and write $f^{-1}(\sigma_j)$ instead of~$\xi_j$, most of the times without a reference to this remark. Similarly, by $f^{-1}(1)$ we denote the unique regular contact point at which the angular limit of~$f$ equals~$1$.
\end{remark}

\begin{remark}\label{RM_contact-from-LCh}
It follows immediately from the above remark and~\cite[Lema~5.1]{Gumenyuk_parametric} that if $\phi(\UD)\subset f_1(\UD)\subset f_2(\UD)\subset \UD$ for some univalent maps $f_1,f_2:\UD\to\C$, then $\psi:=f_2^{-1}\circ f_1$ has a regular contact point at $f^{-1}_1(\sigma)$ for any $\sigma\in F\cup\{1\}$ and $\psi\big(f^{-1}_1(\sigma)\big)=f^{-1}_2(\sigma)$.
\end{remark}

First, applying \cite[Lemma~5.5]{Gumenyuk_parametric}, we are going to construct a multi-parameter ``inclusion chain''~$(g_a)_{a\in[0,1]^n}$, which later will be used to obtain an evolution family in~$\Uone[F]$ containing~$\phi$. In what follows, $a$ and $b$ denote elements of~$[0,1]^n$ with components $a_0,a_1,\ldots,a_{n-1}$ and $b_0,b_1,\ldots,b_{n-1}$, respectively. By $|L|$ we will denote the length of a circular arc or a line segment~$L$.

\begin{lemma}\label{LM_multiparam_family}
In the above notation, there exists a family~$(g_a)_{a\in[0,1]^n}$ of  univalent holomorphic self-maps of~$\UD$ satisfying the following conditions:
\begin{mylist}
\item[(g1)] for every $a\in[0,1]^n$, $g_a\in\UF[\{1,\sigma_1,\sigma_n\}]$;

\item[(g2)] $g_{(0,\ldots,0)}=\phi$ and $g_{(1,\ldots,1)}=\id_\UD$;

\item[(g3)] if $a,b\in[0,1]^n$, $a_j\le b_j$ for all $j=0,\ldots,{n-1}$, then $g_a(\UD)\subset g_b(\UD)$, with the strict inclusion if $a_k<b_k$ for some $k\in[1,n-1]\cap\N$;

\item[(g4)] if $k\in[0,n-1]\cap\Natural$ and if $a,b\in[0,1]^n$ with $a_k\le b_k$ and $a_j=b_j$ for all $j=0,\ldots,n-1$, $j\neq k$, then $\psi_{a,b}:=g_b^{-1}\circ g_a\in\mathcal C\big(\UC\setminus C_k(a)\big)$, with $\psi_{a,b}(\UC\setminus C_k(a))=\UC\setminus C_k(b)$, where for $k>0$  and $x\in[0,1]^n$, $C_k(x)$  denotes the closed arc of~$\UC$ going counter-clockwise from~$g_x^{-1}(\sigma_k)$ to~$g^{-1}_x(\sigma_{k+1})$ and $C_0(x):=L_0$ for any~$x\in[0,1]^n$;

\item[(g5)] $g_a$ does not depend on~$a_0$ if and only if $\phi\in\mathcal C(L_0)$; and if these two equivalent assertions fail, then the inequality ${a_0<b_0}$ becomes a sufficient condition for the strict inclusion in~(g3).

\item[(g6)] for any map $t\in[0,+\infty)\mapsto a(t)=\big(a_0(t),\ldots,a_{n-1}(t)\big)\in [0,1]^n$ with absolutely continuous and non-decreasing components $t\mapsto a_j(t)$, $j=0,\ldots,n-1$, the functions $\varphi_{s,t}:=g_{a(t)}^{-1}\circ g_{a(s)}$, $t\ge s\ge0$, form an evolution family;

\item[(g7)]  the maps $[0,1]^n\ni a\mapsto g_a'(1)$ and $[0,1]^n\ni a\mapsto g_a^{-1}(\sigma_j) $, $j=2,\ldots,{n-1}$, are separately continuous in each of the variables $a_0$, $a_1$,\ldots, $a_n$.
\end{mylist}
\end{lemma}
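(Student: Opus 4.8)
The plan is to build $(g_a)_{a\in[0,1]^n}$ iteratively, peeling off one coordinate at a time, starting from $\phi=g_{(0,\dots,0)}$ and using repeatedly the single-parameter construction of~\cite[Lemma~5.5]{Gumenyuk_parametric}. Roughly, $a_k$ (for $k=1,\dots,n-1$) will control how the image domain ``opens up'' across the arc $L_k$ between the contact preimages of $\sigma_k$ and $\sigma_{k+1}$, while $a_0$ controls the opening across $L_0$ (the arc through~$1$), which is present only when $\phi\in\mathcal C(L_0)$ fails. The key structural point is that \cite[Lemma~5.5]{Gumenyuk_parametric} lets us take a univalent self-map $\psi$ of~$\UD$ with prescribed boundary regular fixed points, fix one of the ``missing'' boundary arcs across which $\psi$ is \emph{not} in $\mathcal C$, and deform $\psi(\UD)$ monotonically — enlarging the image only near that arc — until the corresponding obstruction disappears, all while keeping the other prescribed fixed points regular. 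Applying this with $\psi=\phi$ and arc $L_1$ produces a one-parameter chain $a_1\mapsto g_{(0,a_1,0,\dots,0)}$; composing the output with a further application for $L_2$, and so on, and finally (if needed) for $L_0$, yields a map $[0,1]^n\ni a\mapsto g_a$ by taking the image at step $k$ to depend on $a_k$ via the chain produced at that step, with the chains for different $k$ commuting in the sense that the resulting domain $g_a(\UD)$ depends only on the tuple $a$, not on the order of construction. This last commutativity is what requires care: because the arcs $L_0,L_1,\dots,L_{n-1}$ are pairwise disjoint and the deformations at step $k$ only affect $g_a(\UD)$ in a neighbourhood of the arc $C_k$, the enlargements across distinct arcs are ``independent'', so one can realize all of $[0,1]^n$ coherently. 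I would make this precise by first constructing the family on the edges $\{a: a_j=0 \text{ or } a_j=1 \text{ for all } j\}$ of the cube by iterated composition, and then interpolating; the disjoint-support property guarantees the interpolation is well defined.

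Once the domains $\Omega_a:=g_a(\UD)$ are in hand, $g_a$ is recovered as the Riemann map $\UD\to\Omega_a$ normalized by the condition that $1$ be a boundary regular fixed point with $g_a(1)=1$ and, say, $g_a^{-1}(\sigma_1)$, $g_a^{-1}(\sigma_n)$ placed consistently — the normalization making $g_a$ unique. Properties (g1) and (g2) are then immediate from the construction: at each step \cite[Lemma~5.5]{Gumenyuk_parametric} preserves regularity of $1,\sigma_1,\sigma_n$ as fixed points, and at $a=(1,\dots,1)$ every obstruction has been removed so $g_a(\UD)=\UD$, forcing $g_a=\id_\UD$ by the normalization. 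Property (g3) — monotone inclusion of domains, strict when some $a_k$ ($k\ge1$) strictly increases — is built into \cite[Lemma~5.5]{Gumenyuk_parametric} coordinate by coordinate; strictness follows because increasing $a_k$ genuinely enlarges $\Omega_a$ near $C_k$ (the arc $L_k$ is one across which $\phi\notin\mathcal C(L_k)$ by \cite[Lemma~5.6(ii)]{Gumenyuk_parametric}, so there is ``room'' to grow). Property (g5) is the corresponding statement for $a_0$: $g_a$ is independent of $a_0$ exactly when there is no obstruction across $L_0$ to begin with, i.e.\ when $\phi\in\mathcal C(L_0)$; otherwise increasing $a_0$ strictly enlarges $\Omega_a$. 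Property (g4) describes $\psi_{a,b}=g_b^{-1}\circ g_a$ when only the $k$-th coordinate changes: since $\Omega_b$ is obtained from $\Omega_a$ by enlarging only near $C_k$, the inclusion map $\Omega_a\hookrightarrow\Omega_b$ pulls back (via the Riemann maps, using Remark~\ref{RM_contact-from-LCh}) to a self-map of~$\UD$ that extends continuously and unimodularly to $\UC\setminus C_k(a)$ and sends that arc complement onto $\UC\setminus C_k(b)$; this is exactly the shape of map produced by \cite[Lemma~5.5]{Gumenyuk_parametric}.

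For (g6), given an absolutely continuous non-decreasing path $t\mapsto a(t)$, the maps $\varphi_{s,t}=g_{a(t)}^{-1}\circ g_{a(s)}$ satisfy EF1 and EF2 automatically (the latter from $g_{a(u)}^{-1}\circ g_{a(u)}=\id$), and EF3 — the local Lipschitz-in-$t$ estimate at each fixed $z\in\UD$ — follows by a standard argument: on the decomposition into coordinate directions, $\|g_{a(t)}^{-1}\circ g_{a(s)}-\id\|$ near a fixed compact set is controlled by $\sum_j |a_j(t)-a_j(s)|$ times a local constant, using that each single-coordinate chain from \cite[Lemma~5.5]{Gumenyuk_parametric} is absolutely continuous in its parameter in the appropriate sense; absolute continuity of $t\mapsto a_j(t)$ then yields an integrable dominating function. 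Finally (g7), separate continuity of $a\mapsto g_a'(1)$ and $a\mapsto g_a^{-1}(\sigma_j)$ in each $a_k$, follows from continuous dependence of the Riemann map $g_a$ (and hence of its boundary angular derivative at~$1$ and its contact preimages, via Theorem~\ref{TH_Sarason}/Corollary~\ref{CR_Sarason} applied to the associated transition maps) on the one real parameter $a_k$ while the others are frozen — which is precisely the continuity guaranteed by the single-parameter construction. The main obstacle is establishing the coherent multi-parameter family with the disjoint-support/commutativity property so that all of (g3)--(g5) hold simultaneously with the stated strictness; once that bookkeeping is done, (g1), (g2), (g6), (g7) are comparatively routine consequences of \cite[Lemma~5.5]{Gumenyuk_parametric} and the angular-derivative facts recalled in Sections~\ref{S_prelim}--\ref{S_AC-measure}.
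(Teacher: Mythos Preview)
Your approach diverges from the paper's in two substantive ways, and at least one of them conceals a genuine gap.

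First, on the construction of $(g_a)$: the paper does \emph{not} build the multi-parameter family iteratively. It applies \cite[Lemma~5.5]{Gumenyuk_parametric} \emph{once} (with $z_0:=0$ and $L_n:=L_0$) to obtain the entire family $(f_a)_{a\in[0,1]^n}$ directly, normalized at the \emph{interior} point $\phi(0)$: namely $f_a(0)=\phi(0)$ and $f_a'(0)\overline{\phi'(0)}>0$. The boundary normalization you want --- fixing $1,\sigma_1,\sigma_n$ --- is then achieved by post-composing with the unique $h_a\in\Aut(\UD)$ taking $1,\sigma_1,\sigma_n$ to $f_a^{-1}(1),f_a^{-1}(\sigma_1),f_a^{-1}(\sigma_n)$, and setting $g_a:=f_a\circ h_a$. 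This sidesteps your commutativity worry entirely, since the multi-parameter structure is already packaged in the cited lemma. Your iterative scheme, by contrast, does not obviously produce a coherent family: once you open up across $L_1$, the contact preimages of $\sigma_2,\dots,\sigma_{n-1}$ shift, so the arcs along which subsequent deformations act move as well, and the ``disjoint support'' heuristic is not literally true on $\partial\UD$. Making the interpolation well-defined is a nontrivial step you have not supplied.

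Second, and more seriously, your argument for (g6) is where the real work lies, and the paper's interior normalization is not incidental --- it is precisely what makes $(F_t):=(f_{a(t)})$ a Loewner chain, via the Lipschitz conformal-radius property coming from \cite[Lemma~5.5]{Gumenyuk_parametric} (cf.\ \cite[proof of Theorem~2.3]{CAOT}). The associated $(\Phi_{s,t})$ is then an evolution family, the $\eta_j=f_{a(0)}^{-1}(\sigma_j)$ are regular contact points of it, and \cite[Theorem~3.5]{BRFPLoewTheory} yields local absolute continuity of $t\mapsto f_{a(t)}^{-1}(\sigma_j)$. That is what forces the automorphisms $h_{a(t)}$ to vary absolutely continuously, after which \cite[Lemma~2.8]{SMP} gives that $\varphi_{s,t}=h_{a(t)}^{-1}\circ\Phi_{s,t}\circ h_{a(s)}$ is again an evolution family; the same mechanism, run along single-coordinate paths, yields (g7). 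Your proposed ``standard Lipschitz'' bound $\|\varphi_{s,t}-\id\|\lesssim\sum_j|a_j(t)-a_j(s)|$ assumes exactly the absolute continuity in the parameter that needs to be proved, and with a purely boundary normalization you have no Loewner-chain structure to fall back on, nor any evident substitute for the regular-contact-point machinery of \cite{BRFPLoewTheory}.
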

\begin{proof}
Let us first suppose that $\phi$ fails to extend continuously to~$L_0$ with $\phi(L_0)\subset\UC$. Then by  \cite[Lemma~5.5]{Gumenyuk_parametric}, applied with $z_0:=0$ and $L_n:=L_0$, there exists a family $(f_a)_{a\in[0,1]^n}\subset\U$ such that:
\begin{itemize}
\item[(A)] assertions (g3), (g4), and (g5) hold with $(g_a)$ replaced by~$(f_a)$;
\item[(B)] $f_{(0,\ldots,0)}=\phi$ and $f_{(1,\ldots,1)}\in\Aut(\UD)$;
\item[(C)] $f_a'(0)=\phi(0)$ and $f'_a(0)\overline{\phi'(0)}>0$ for all $a\in[0,1]^n$;
\item[(D)] the map $[0,1]^n\ni a\mapsto R\big(f_a(\UD),\phi(0)\big)$, where $R(D,w_0)$ stands for the conformal radius of~$D$ w.r.t.~$w_0$, is Lipschitz continuous.
\end{itemize}
For each~$a\in[0,1]^n$, consider the unique $h_a\in\Aut(\UD)$  that takes the points $\sigma_0:=1$, $\sigma_1$, and $\sigma_n$ to the points $f^{-1}_a(1)$, $f^{-1}_a(\sigma_1)$, and $f^{-1}_a(\sigma_n)$, respectively. Then the family $(g_a)$ defined by $g_a:=f_a\circ h_a$ satisfy conditions~(g1)\,--\,(g5).

To prove~(g6), we notice that from (A), (C), and (D) it follows that $(F_t)_{t\ge0}$ defined by $F_t:=f_{a(t)}$ for all~$t\in[0,+\infty)$ is a Loewner chain, see, e.g.
\cite[proof of Theorem~2.3]{CAOT}. Let $(\Phi_{s,t})$ be the evolution family associated with~$(F_t)$.  Then $\eta_j:=f_{a(0)}^{-1}(\sigma_j)$, $j=0,\ldots,n$, are regular contact points of~$(\Phi_{s,t})$ in the sense of~\cite[Definition~3.1]{BRFPLoewTheory}.
By~\cite[Theorem~3.5]{BRFPLoewTheory}, the maps $[0,+\infty)\ni t\mapsto \Phi_{0,t}(\eta_j)=f_t^{-1}(\sigma_j)$, ${j=0,\ldots,n}$, are locally absolutely continuous.  Therefore, with the help of \cite[Lemma~2.8]{SMP} we conclude that the functions $$\varphi_{s,t}:=g_{a(t)}^{-1}\circ g_{a(s)}=h_{a(t)}^{-1}\circ\Phi_{s,t}\circ h_{a(s)},\quad t\ge s\ge0,$$ form an evolution family.

It remains to prove~(g7).
 To this end fix some~$k\in[0,n-1]\cap\N$ and apply the above argument to the function $t\mapsto a(t)=\big(a_0(t),\ldots,a_{n-1}(t)\big)$ with the components $a_k=\min\{t,1\}$ and $a_j=a_j^0$ for all $j=0,\ldots,n-1$, $j\neq k$, where $a_j^0$'s are some arbitrary numbers in~$[0,1]$. As above, by~\cite[Theorem~3.5]{BRFPLoewTheory}, it follows that $t\mapsto |f'_{a(t)}(1)|=|f'_{a(0)}(1)|/|\Phi_{0,t}'(\eta_0)|$  and $t\mapsto f_{a(t)}^{-1}(\sigma_j)=\Phi_{0,t}(\eta_j)$, $j=0,\ldots,n$, are continuous. This fact immediately implies~(g7).

The proof for the case in which $\phi$ extends continuously to~$L_0$ with $\phi(L_0)\subset\UC$ is similar except that the family~$(g_a)$ we construct depends only on~$n-1$ parameters $a_j$, each corresponding to one of the arcs~$L_1,\ldots, L_{n-1}$. In this case, for each $a\in[0,1]^n$, $g_a\in\mathcal{C}(L_0)$ with $g_a(L_0)=L_0$. Adding the parameter~$a_0$ formally gives the family~$(g_a)$ satisfying all the conditions~(g1)--(g7).
\end{proof}

Now we are going to reduce the number of the parameters by choosing $a_0$ to be a suitable function of~$a':=(a_1,\ldots,a_{n-1})$. As usual, we will identify $(a_0,a')$ with $(a_0,a_1,\ldots,a_{n-1})$.

\begin{lemma}\label{LM_monotonic}
Let, as above, $\phi\in\Uone[F]\setminus\{\id_\UD\}$ and let $(g_a)_{a\in[0,1]^n}$ be a family of  univalent holomorphic self-maps of~$\UD$ satisfying conditions (g1)\,--\,(g5) and (g7) in Lemma~\ref{LM_multiparam_family}. Then there exists a map $[0,1]^{n-1}\ni a':=(a_1,\ldots,a_{n-1})\mapsto a_0(a')\in[0,1]$ with the following properties:
\begin{itemize}
\item[(n1)] $a_0(0,\ldots,0)=0$ and $a_0(1,\ldots,1)=1$;
\item[(n2)] $a'\mapsto a_0(a')$ is continuous and non-decreasing in each $a_j$, $j=1,\ldots,n-1$;
\item[(n3)] $a'\mapsto \lambda\big(a_0(a'),a'\big)$, where $\lambda(a):=g'_{a}(1)$, is non-decreasing in each $a_j$, ${j=1,\ldots,n-1}$.
\end{itemize}
\end{lemma}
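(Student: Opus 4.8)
The idea is to use the $0$-th parameter --- which by (g5) is the one attached to the arc $L_0$ carrying the point $1$ --- as a reservoir that absorbs any failure of monotonicity of $\lambda(a)=g_a'(1)$ in the remaining variables. Two facts are at hand from the outset: by (g2), $\lambda$ equals $\phi'(1)$ at $(0,\dots,0)$ and equals $1$ at $(1,\dots,1)$, and $\phi'(1)\le1$ because $1$ is the Denjoy--Wolff point of $\phi$; and by (g7) the function $\lambda$ is separately continuous. So (n1)--(n3) amount to producing a continuous, coordinatewise non-decreasing section $a'\mapsto a_0(a')$ of the cube joining $(0,\dots,0)$ to $(1,\dots,1)$ along which $\lambda$ is non-decreasing.

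\emph{Step 1: behaviour of $\lambda$ in the variable $a_0$.} Fix $a'\in[0,1]^{n-1}$ and $0\le a_0\le b_0\le1$, and put $a:=(a_0,a')$, $b:=(b_0,a')$. By (g3), $\psi:=g_b^{-1}\circ g_a$ is a univalent self-map of $\UD$; by Remark~\ref{RM_contact-from-LCh} (applied with $f_1=g_a$, $f_2=g_b$) the point $1$ is a boundary regular fixed point of $\psi$, and the chain rule for angular derivatives gives $\psi'(1)=g_a'(1)/g_b'(1)=\lambda(a)/\lambda(b)$. By (g4) with $k=0$, $\psi$ extends continuously to $\UC\setminus L_0$ and carries it \emph{onto itself}; thus $\UD\setminus\psi(\UD)$ is a crosscut domain attached to a sub-arc of $L_0$, and the endpoints $\sigma_1,\sigma_n$ of $L_0$ are fixed by $\psi$ with angular derivative $\ge1$. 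It remains to see that $1$ is the Denjoy--Wolff point of $\psi$; this is the step that forces one to open up the construction behind Lemma~\ref{LM_multiparam_family}, namely that in \cite[Lemma~5.5]{Gumenyuk_parametric}, applied with $z_0=0$ and $L_n=L_0$, the point $1$ is the common Denjoy--Wolff point of all the $g_a$'s and remains so for the transition map $\psi$. Granting this, $\psi'(1)\le1$, hence $\lambda(a)\le\lambda(b)$, the inequality being strict by (g5) whenever $a_0<b_0$ and $\phi\notin\mathcal C(L_0)$. The analogous analysis of the arcs $C_k$, $k\ge1$, which all avoid $1$, is what yields quantitative control of $\lambda$ in the variables $a_1,\dots,a_{n-1}$; in particular, in the degenerate case $\phi\in\mathcal C(L_0)$, where $g_a$ does not depend on $a_0$, it shows that $\lambda$ is already coordinatewise non-decreasing. \emph{Identifying these Denjoy--Wolff points is the main obstacle.}

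\emph{Step 2: construction of $a_0(\cdot)$.} If $\phi\in\mathcal C(L_0)$, put $a_0(a'):=\tfrac{1}{n-1}\sum_{j=1}^{n-1}a_j$: then (n1)--(n2) are clear, and (n3) holds because $\lambda$ is independent of $a_0$ and non-decreasing in each $a_j$ by Step 1. Otherwise, Step 1 shows that for each fixed $a'$ the map $[0,1]\ni a_0\mapsto\lambda(a_0,a')$ is continuous and strictly increasing from $\lambda(0,a')$ to $\lambda(1,a')$. I would then fix a continuous non-decreasing ``envelope'' $\Lambda^*\colon[0,1]^{n-1}\to[\phi'(1),1]$ with $\Lambda^*(0,\dots,0)=\phi'(1)$, $\Lambda^*(1,\dots,1)=1$, and $\lambda(0,a')\le\Lambda^*(a')\le\lambda(1,a')$ for all $a'$, chosen moreover so that $\Lambda^*$ increases in each direction $a_j$ at least as fast as $a'\mapsto\lambda(s,a')$ does, uniformly in $s\in[0,1]$. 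I expect the existence of such an envelope to be where the quantitative output of Step 1 in the $a_j$-directions --- together with $0<\lambda\le1$ and $\lambda(0,\cdot)\le\lambda(1,\cdot)$ --- is really needed, and this is a secondary difficulty. Finally, let $a_0(a')$ be the unique $s\in[0,1]$ with $\lambda(s,a')=\Lambda^*(a')$.

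\emph{Step 3: verification.} Property (n3) is the defining identity $\lambda(a_0(a'),a')=\Lambda^*(a')$, which is non-decreasing by construction. Property (n1) follows from the boundary values of $\Lambda^*$ and the strict monotonicity of $\lambda(\cdot,a')$ at $a'=(0,\dots,0)$ and $a'=(1,\dots,1)$. For (n2): continuity of $a'\mapsto a_0(a')$ is an elementary implicit-function argument using the separate continuity of $\lambda$ and $\Lambda^*$ and the strict monotonicity of $\lambda(\cdot,a')$; monotonicity of $a_0(\cdot)$ uses that $\Lambda^*$ is non-decreasing and outpaces the $a_j$-variation of $\lambda$ --- so that raising some $a_j$ forces $a_0(a')$ not to decrease --- together with the monotonicity of $\lambda$ in $a_0$ from Step 1. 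The routine inequalities here I would leave to the write-up.
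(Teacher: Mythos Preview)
Your Step~1 has the direction of monotonicity in $a_0$ reversed, and this error propagates through the rest of the argument. When you vary only $a_0$ (with $a_0<b_0$), condition~(g4) with $k=0$ says that the transition map $\psi=\psi_{a,b}=g_b^{-1}\circ g_a$ is continuous on $\UC\setminus L_0$; equivalently, the ``slit'' $\UD\setminus\psi(\UD)$ is attached to a sub-arc of $L_0$. But $1\in L_0$, so the slit sits on the arc that \emph{contains} the point~$1$. In that geometric situation $1$ is \emph{not} the Denjoy--Wolff point of $\psi$: by \cite[Lemma~5.6(ii)]{Gumenyuk_parametric} one has $\psi'(1)>1$, hence $\lambda(a)=\lambda(b)\,\psi'(1)>\lambda(b)$. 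So $\lambda$ is strictly \emph{decreasing} in $a_0$ (when $\phi\notin\mathcal C(L_0)$). Your attempted justification, via ``$1$ is the common Denjoy--Wolff point of all the $g_a$'s'', is also false: (g1) only guarantees that $1,\sigma_1,\sigma_n$ are boundary regular fixed points of $g_a$, and indeed $\lambda(0,1,\ldots,1)>1$ by the paper's own argument, so $1$ cannot be the Denjoy--Wolff point of $g_{(0,1,\ldots,1)}$. Conversely, when you fix $a_0$ and vary some $a_k$ with $k\ge1$, the slit lies on $C_k(a)$, which \emph{avoids} $1$; hence $\psi\in\mathcal C(L_0)$ and \cite[Lemma~5.6(i)]{Gumenyuk_parametric} gives $\psi'(1)\le1$, so $\lambda$ is (strictly) increasing in $a_1,\ldots,a_{n-1}$. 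You have the two cases swapped.

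With the correct monotonicity, your Step~2 collapses: the envelope mechanism you propose is built on $\lambda(\cdot,a')$ being increasing, and the extra requirement that $\Lambda^*$ ``outpaces the $a_j$-variation of $\lambda$'' is neither proved nor obviously available. The paper's construction is simpler and avoids any auxiliary envelope: knowing that $\lambda$ is continuous, strictly decreasing in $a_0$, and strictly increasing in $a_1,\ldots,a_{n-1}$, one defines $a_0(a')$ as the unique solution of $\lambda(a_0,a')=\min\{\lambda(0,a'),\,1\}$. Existence and uniqueness follow from $\lambda(1,a')\le\lambda(1,1,\ldots,1)=1$ and the strict monotonicity in $a_0$; the right-hand side is a minimum of two continuous non-decreasing functions of $a'$, which yields (n3) and the continuity part of (n2); monotonicity of $a_0(\cdot)$ then follows by a direct case analysis (either $a_0(a')=0$ or $a_0(a')$ solves $\lambda(a_0,a')=1$). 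No ``quantitative control'' in the $a_j$-directions beyond bare monotonicity is needed.
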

\begin{proof}
Fix for a moment $a,b\in[0,1]^n$ with $a_j\le b_j$ for all~$j=0,\ldots,n-1$. Then $\psi_{a,b}:=g_b^{-1}\circ g_a\in\UF[1,\sigma_1,\sigma_n]$ thanks to conditions~(g1)\,--\,(g3) and Remark~\ref{RM_contact-from-LCh}. Moreover, by the Chain Rule for angular derivatives, see Remark~\ref{RM_ChainRule}, $\lambda(a)=\lambda(b)\psi'_{a,b}(1)$.

Suppose first that $\phi\in\mathcal C(L_0)$. Then $g_a$ does not depend on $a_0$ and $\psi_{a,b}\in\mathcal C(L_0)$. By~\cite[Lemma~5.6(i)]{Gumenyuk_parametric}, $\psi'_{a,b}(1)\le1$ and hence $\lambda(a)\le \lambda(b)$. This show that $\lambda$ is non-decreasing in variables~$a_1$,\ldots, $a_{n-1}$ and (trivially) does not depend on~$a_0$. Therefore, in case $\phi\in\mathcal C(L_0)$, we may choose any map $a'\mapsto a_0(a')$ satisfying (n1) and (n2), e.g.,  $$\textstyle a_0(a'):=\left(\sum_{j=1}^{n-1}a_j\right)/(n-1).$$

So from now on we may suppose that $\phi\not\in\mathcal C(L_0)$. Let $a_0=b_0$. Then $\psi_{a,b}\in\mathcal C(L_0)$ by~(g4) and arguing as above, we see that $\psi_{a,b}'(1)\le 1$. Moreover, if additionally $a\neq b$, then by~(g3), $\psi_{a,b}\neq\id_\UD$ and hence \cite[Lemma~5.6(i)]{Gumenyuk_parametric} yields the strict inequality $\psi_{a,b}'(1)<1$. Therefore, $\lambda$ is increasing in variables~$a_1$,\ldots, $a_{n-1}$. Now let $a'=b':=(b_1,\ldots,b_{n-1})$ and $a_0<b_0$. Arguing in a similar way, but using the second assertion of \cite[Lemma~5.6]{Gumenyuk_parametric}, we see that $\psi_{a,b}'(1)>1$, and hence $\lambda$ is strictly decreasing in~$a_0$.
Moreover, recall that by~(g7), $\lambda$ is continuous separately in each variable $a_0$, $a_1$,\ldots,$a_{n-1}$. Finally, note that separate continuity and monotonicity in each variable imply joint continuity of $a\mapsto \lambda(a)$, see, e.g. \cite[Remark~5.7]{Gumenyuk_parametric}.

Fix some $a'\in[0,1]^{n-1}$. Note that $\lambda(0,a')\le\lambda(1,a')\le\lambda(1,1,\ldots,1)=1$. Therefore, because of monotonicity and continuity in~$a_0$, there exists a unique $a_0(a')\in[0,1]$ such that
\begin{equation}\label{EQ_eq-for-a0}
\lambda(a_0(a'),a')=\min\big\{\lambda(0,a'),\,1\big\}.
\end{equation}
As the minimum of two continuous non-decreasing functions, the right-hand side of~\eqref{EQ_eq-for-a0} is continuous and non-decreasing in~$a_1$,\ldots,$a_{n-1}$, which proves~(m3) and continuity of ${a'\mapsto a_0(a')}$. To see that this map is monotonic, note that by construction $a_0(a')=0$ if ${\lambda(0,a')\le1}$ and that $a_0=a_0(a')\in[0,1]$ solves the equation $\lambda(a_0,a')=1$ if $\lambda(0,a')>1$. Fix $a',b'\in[0,1]^{n-1}$ with $a_j\le b_j$ for all $j=1,\ldots,n-1$. Taking into account that it is not possible to have $\lambda(0,a')>1$ and $\lambda(0,b')\le 1$ at the same time, careful analysis of the remaining three cases shows that $a_0(a')\le a_0(b')$.

By (g2), $\lambda(0,0,\ldots,0)=\phi'(1)\le 1$ and hence $a_0(0,\ldots,0)=0$. Similarly, again by~(g2), $\lambda(1,1,\ldots,1)=1$. Therefore, $\lambda(0,1,\ldots,1)>1$ and we can easily conclude that $a_0(1,\ldots,1)=1$. This completes the proof.
\end{proof}

The following lemma can be viewed as one of the possible analogues of Loewner's Lemma for mappings with boundary Denjoy\,--\,Wolff point.
Fix some points $\xi_j\in\UC$, $j=1,\ldots,n$, ordered counter-clockwise in such a way that the arc of~$\UC$ between $\xi_1$ and~$\xi_n$ that contains $\tau=1$ does not contain the points $\xi_2$,\ldots,$\xi_{n-1}$. Denote by $L'_j$, $j=1,\ldots,n-1$, the open arc of~$\UC$ going counter-clockwise from~$\xi_j$ to~$\xi_{j+1}$.
\begin{lemma}\label{LM_Loewner-type}
In the above notation, let $\psi\in\Uone[\{\xi_1,\xi_n\}]\setminus\{\id_\UD\}$ and suppose that $\xi_2,\ldots,\xi_{n-1}$ are regular contact points of~$\psi$. Let $\Psi:=H\circ\psi\circ H^{-1}$ and $x_j:=H(\xi_j)$, $j=1,\ldots,n$, where $H(z):=i(1+z)/(1-z)$ is the Cayley transform of~$\UD$ onto the upper half-plane~$\UH$. Let $k\in[1,n-1]\cap\N$. If $\psi\in\mathcal C(L'_j)$ for all~$j=1,\ldots,n-1$, $j\neq k$, then
\begin{align}
 \label{EQ_monot1}
 \Psi(x_{k+1})-\Psi(x_k) &< x_{k+1}-x_k,\quad\text{and}\\
 \label{EQ_monot2}
 \Psi(x_{j+1})-\Psi(x_j) &> x_{j+1}-x_j,\quad\text{for all~$j=1,\ldots,n-1$, $j\neq k$}.
\end{align}
\end{lemma}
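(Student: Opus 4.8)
The plan is to conjugate to the upper half-plane and exploit the Nevanlinna representation~\eqref{EQ_Nevanlinna0}. Since $\tau=1$ is the Denjoy--Wolff point of $\psi$ while $\psi$ fixes in addition the point $\xi_1\neq1$, the map $\psi$ is not parabolic (a parabolic self-map has a unique boundary fixed point); hence it is hyperbolic and $\lambda:=\psi'(1)\in(0,1)$. Under the Cayley transform, $\Psi$ is a univalent self-map of $\UH$ whose Denjoy--Wolff point is $\infty$, with $\anglim_{\zeta\to\infty}\Psi(\zeta)/\zeta=\beta:=1/\lambda>1$. Hence $\Psi$ admits a representation
\[
\Psi(\zeta)=\alpha+\beta\zeta+\int_{\Real}\frac{1+t\zeta}{t-\zeta}\,\di\nu(t),\qquad\zeta\in\UH,
\]
with $\alpha\in\Real$ and a finite non-negative Borel measure $\nu$. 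Note $\nu\not\equiv0$: otherwise $\Psi$ would be affine and, fixing both $x_1$ and $x_n$, would be the identity, contradicting $\psi\neq\id_\UD$.

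The decisive step is to prove that $\psi$ omits the value $1$ on each ``good'' arc $\overline{L'_j}$, $j\in\{1,\dots,n-1\}\setminus\{k\}$. Fix such a $j$. By $\psi\in\mathcal C(L'_j)$ the map $\psi$ extends holomorphically across $L'_j$ by reflection, with non-vanishing derivative, and $\psi(L'_j)\subset\UC$; in particular every point of $\overline{L'_j}$ is a regular contact point of $\psi$ (the endpoints being either among $\xi_2,\dots,\xi_{n-1}$ or equal to $\xi_1,\xi_n$, which are fixed). Suppose $\sigma\in\overline{L'_j}$ had $\psi(\sigma)=1$. Then $\psi$ would carry a one-sided neighbourhood of $\sigma$ conformally onto a set abutting the Denjoy--Wolff point $1$; but since $\psi'(1)=\lambda$ is finite and non-zero, $1$ is itself a conformal boundary point of $\psi(\UD)$ accessed only through the prime end which is the image of $1$, so by injectivity of $\psi$ we would be forced to have $\sigma=1$, impossible because $1\in L'_0$ is not contained in $\overline{L'_j}$. (One may instead invoke the relevant parts of \cite[Lemma~5.6]{Gumenyuk_parametric}.) Consequently $\Psi$ extends holomorphically across $(x_j,x_{j+1})$ for every $j\neq k$; therefore $\nu$ carries no mass on these intervals, each $x_j$ ($j=1,\dots,n$) is a regular contact point of $\Psi$ with $\Psi(x_j)\in\Real$, and by Corollary~\ref{CR_Sarason} we have $\nu(\{x_j\})=0$. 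This is the one genuinely delicate point; without it even the sign of $\Psi(x_{j+1})-\Psi(x_j)$ would be out of control.

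Now apply Corollary~\ref{CR_Sarason} at $x_j$ and $x_{j+1}$ and combine it with the elementary identity $(1+tx_{j+1})(t-x_j)-(1+tx_j)(t-x_{j+1})=(1+t^2)(x_{j+1}-x_j)$ to obtain, for every $j=1,\dots,n-1$,
\[
\Psi(x_{j+1})-\Psi(x_j)=(x_{j+1}-x_j)\Bigl(\beta+\int_{\Real}\frac{(1+t^2)\,\di\nu(t)}{(t-x_{j+1})(t-x_j)}\Bigr).
\]
For $j\neq k$ the support of $\nu$ is disjoint from $(x_j,x_{j+1})$, hence $(t-x_{j+1})(t-x_j)>0$ for $\nu$-almost every $t$, the integral is non-negative, and, since $\beta>1$ and $x_{j+1}>x_j$, we get $\Psi(x_{j+1})-\Psi(x_j)\geq\beta(x_{j+1}-x_j)>x_{j+1}-x_j$; this is \eqref{EQ_monot2}.

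Finally, for $j=k$ we use that $\Psi$ fixes $x_1$ and $x_n$:
\[
\sum_{j=1}^{n-1}\bigl(\Psi(x_{j+1})-\Psi(x_j)\bigr)=\Psi(x_n)-\Psi(x_1)=x_n-x_1=\sum_{j=1}^{n-1}(x_{j+1}-x_j);
\]
subtracting from this the $n-2$ strict inequalities \eqref{EQ_monot2} (valid for all $j\neq k$) yields $\Psi(x_{k+1})-\Psi(x_k)<x_{k+1}-x_k$, i.e.~\eqref{EQ_monot1}. The boundary cases $k=1$ and $k=n-1$, in which one endpoint of $L'_k$ coincides with a boundary fixed point of $\psi$, are handled in exactly the same manner. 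The main obstacle throughout is the second paragraph, which I expect to rest entirely on the univalence of $\psi$ and on the location of the Denjoy--Wolff point on the complementary arc $L'_0$.
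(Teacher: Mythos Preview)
Your overall architecture---Nevanlinna representation, Corollary~\ref{CR_Sarason} at each $x_j$, positivity of the kernel $\frac{1+t^2}{(t-x_j)(t-x_{j+1})}$ on the $\nu$-support, and the telescoping sum to recover~\eqref{EQ_monot1}---is exactly the paper's route.

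There is, however, a genuine error in your first paragraph. The assertion that ``a parabolic self-map has a unique boundary fixed point'' is false: parabolic self-maps of~$\UD$ can have additional boundary \emph{regular} fixed points (take, e.g., $\Psi(\zeta)=\alpha+\zeta+\int\frac{1+t\zeta}{t-\zeta}\,\di\nu$ with $\beta=1$ and $\nu$ a suitable two-point measure; one can arrange two real fixed points). So nothing in the hypotheses forces $\psi'(1)<1$, and your claim $\beta>1$ is unjustified. This matters because you use $\beta>1$ to obtain strictness in~\eqref{EQ_monot2}. The fix is immediate and you already have the ingredient: you correctly observe $\nu\not\equiv0$ (else $\Psi$ is affine with two fixed points, hence the identity). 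Since $\nu([x_j,x_{j+1}])=0$ for $j\neq k$, the integrand $\frac{1+t^2}{(t-x_j)(t-x_{j+1})}$ is strictly positive $\nu$-a.e., so the integral is strictly positive; combined with $\beta\ge1$ this gives the strict inequality. This is precisely how the paper argues: it notes that equality in~\eqref{EQ_monot2} would force $\beta=1$ and $\nu(\Real)=0$, hence $\Psi=\id_\UH$.

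Your ``decisive step''---ruling out $\psi(\sigma)=1$ on the good arcs $\overline{L'_j}$---is a legitimate point that the paper passes over with the bare assertion ``$\psi\in\mathcal C(L'_j)$, hence $\nu([x_j,x_{j+1}])=0$''. That implication does require $1\notin\psi(L'_j)$ (otherwise $\Psi$ has poles on $(x_j,x_{j+1})$ and $\nu$ picks up point masses there), so your instinct to justify it is sound. Your argument via univalence and the local conformality at a hypothetical preimage of~$1$ is correct in spirit; you can make it airtight by noting that such a preimage would make a full one-sided neighbourhood of $1$ in~$\UD$ lie in $\psi(U\cap\UD)$ for some neighbourhood $U$ of $\sigma$, while the radial segment at~$1$ is sent by~$\psi$ into that same neighbourhood, forcing (by injectivity of~$\psi$ on~$\UD$) the segment into~$U$---impossible since $\sigma\neq1$.
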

\begin{proof}
The Nevalinna representation of $\Psi$ is,
see, e.g. \cite[p.\,135--142]{Bhatia}:
\begin{equation}\label{EQ_Nevanlinna}
\Psi(z)=\alpha+\beta z+\int_{\Real}\frac{1+t\zeta}{t-\zeta}\,\DI\nu(t)\quad\text{for all~$\zeta\in\UH$,}
\end{equation}
where $\alpha\in\Real$, $\beta:=1/\phi'(1)\ge 1$ and $\nu$ is a finite positive Borel measure on~$\Real$.
In accordance with Corollary~\ref{CR_Sarason}, for $j=1,\ldots,n-1$ we have
\begin{equation}\label{EQ_xm-xl}
\Psi(x_{j+1})-\Psi(x_j)=(x_{j+1}-x_{j})\Big[\beta+\int\limits_\Real\frac{1+t^2}{(t-x_{j})(t-x_{j+1})} \,\DI\nu(t)\Big].
\end{equation}
If $j\neq k$, then by hypothesis, $\psi\in\mathcal C(L'_j)$, and hence $\nu([x_{j},x_{j+1}])=0$. With this taken into account, \eqref{EQ_xm-xl} implies \eqref{EQ_monot2} with the sign $>$ replaced by~$\ge$. To check that actually the strict inequality holds, we note that the equality is possible only if $\beta=1$ and $\nu(\Real)=0$. In such a case, taking into account that $\Psi(x_1)=x_1$, from~\eqref{EQ_Nevanlinna} we would get $\Psi=\id_\UH$, which contradicts the hypothesis.
Finally~\eqref{EQ_monot1} follows immediately because
$$
\sum_{j=1}^{n-1}\big(\Psi(x_{j+1})-\Psi(x_j)\big)=\Psi(x_n)-\Psi(x_1)=x_n-x_1=\sum_{j=1}^{n-1}(x_{j+1}-x_j).
$$\vskip-3ex
\end{proof}

 Applying the above lemma, we can now pass in the multi-parameter family~$(g_a)_{a\in[0,1]}$ to a unique parameter in such a way that the points~$\sigma_j$, $j=1,\ldots,n$, are kept fixed and the angular derivative at~$\tau=1$ is non-increasing. For $a\in[0,1]^n$ and $j=1,\ldots,n$, denote $\xi_j(a):=g^{-1}_a(\sigma_j)$.

\begin{lemma}\label{LM_pass-to-one-parameter}
Under hypothesis of Lemma~\ref{LM_monotonic}  there exists a continuous map $[0,1]\ni \theta\mapsto a(\theta)=\big(a_0(\theta),a_1(\theta),\ldots,a_n(\theta)\big)\in[0,1]^n$ such that:
\begin{itemize}
\item[(m1)] $\theta\mapsto a_j(\theta)$ is strictly increasing for each~$j=1,\ldots n$ and non-decreasing for $j=0$;
\item[(m2)] $a(0)=(0,\ldots,0)$ and $a(1)=(1,\ldots,1)$;
\item[(m3)] $\xi_j\big(a(\theta)\big)=\sigma_j$ for each $j=1\ldots,n$ and any~$\theta\in[0,1]$;
\item[(m4)] $\theta\mapsto \lambda(a(\theta))=g_{a(\theta)}'(1)$ is non-decreasing.
\end{itemize}
\end{lemma}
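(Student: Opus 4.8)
The plan is to apply Lemma~\ref{LM_monotonic} to pin $a_0$ to the value $a_0(a')$ it produces, to work with the $(n-1)$-parameter family $\tilde g_{a'}:=g_{(a_0(a'),a')}$, $a'\in[0,1]^{n-1}$, and then to reduce to a single parameter by enforcing, one equation at a time, that the intermediate points $\sigma_2,\dots,\sigma_{n-1}$ stay fixed. I pass to half-plane coordinates: put $s_j:=H(\sigma_j)$ (so $s_1<\dots<s_n$), $x_j(a'):=H\big(\xi_j(a_0(a'),a')\big)$ and $d_j(a'):=x_{j+1}(a')-x_j(a')$ for $j=1,\dots,n-1$. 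Since each $g_a$ fixes $\sigma_1$ and $\sigma_n$, we have $x_1\equiv s_1$ and $x_n\equiv s_n$, hence $d_1+\dots+d_{n-1}\equiv s_n-s_1$; moreover $d_j>0$ because the points $\xi_1(\cdot),\dots,\xi_n(\cdot)$ occur in this cyclic order on the arc $\UC\setminus L_0$. Finally, $\tilde g_0=\phi$ and $\tilde g_{(1,\dots,1)}=\id_\UD$ by (g2) and (n1), so $d_j(0)=d_j(1,\dots,1)=\delta_j:=s_{j+1}-s_j>0$.

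The key step — and the one I expect to be the main obstacle — is the following monotonicity statement: \emph{if $b'$ differs from $a'$ in a single coordinate $a_k$, $k\in\{1,\dots,n-1\}$, with $a_k<b'_k$, then $d_k(b')<d_k(a')$ while $d_j(b')>d_j(a')$ for every $j\neq k$.} To prove it I factor the transition map as $\tilde\psi:=\tilde g_{b'}^{-1}\circ\tilde g_{a'}=\psi^{(2)}\circ\psi^{(1)}$, where $\psi^{(1)}$ changes only the $0$-th coordinate (from $a_0(a')$ to $a_0(b')$) and $\psi^{(2)}$ only the $k$-th; by (g4), $\psi^{(1)}\in\mathcal C(\UC\setminus L_0)$ maps $\UC\setminus L_0$ onto itself and $\psi^{(2)}\in\mathcal C\big(\UC\setminus C_k(\cdot)\big)$. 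Writing $\eta_j:=\xi_j(a_0(b'),a')=g_{(a_0(b'),a')}^{-1}(\sigma_j)$, Remark~\ref{RM_contact-from-LCh} gives $\psi^{(1)}(\xi_j(a'))=\eta_j$, so $\psi^{(1)}$ carries the arc of $\UC\setminus L_0$ between $\xi_j(a')$ and $\xi_{j+1}(a')$ onto the arc between $\eta_j$ and $\eta_{j+1}$; for $j\neq k$ this image arc is disjoint from $C_k(\cdot)$, whence $\tilde\psi$ is flat on the $j$-th arc for every $j\neq k$. Remark~\ref{RM_contact-from-LCh} also shows that the $\tilde g_{a'}$-preimages of $\sigma_2,\dots,\sigma_{n-1}$ are regular contact points of $\tilde\psi$; the Chain Rule (Remark~\ref{RM_ChainRule}) together with the monotonicity of $a'\mapsto\lambda(a_0(a'),a')$ coming from Lemma~\ref{LM_monotonic} gives $\tilde\psi'(1)\le1$, so $1$ is the Denjoy\,--\,Wolff point of $\tilde\psi$; and $\tilde\psi\neq\id_\UD$ by (g3). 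Hence Lemma~\ref{LM_Loewner-type} applies with this $k$, and its two inequalities, rewritten using $H\big(\tilde\psi(\xi_j(a'))\big)=x_j(b')$, are exactly the claimed ones. (If $\phi\in\mathcal C(L_0)$ then $\psi^{(1)}=\id_\UD$ and the argument is shorter.) Finally, this separate monotonicity together with (g7) and continuity of $a_0(\cdot)$ yields, by the real-variable fact used in the proof of Lemma~\ref{LM_monotonic}, joint continuity of each $a'\mapsto d_j(a')$.

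I then construct the path by eliminating $a_{n-1},a_{n-2},\dots,a_2$ in turn. Assume inductively that for some $m\in\{2,\dots,n-1\}$ we have a continuous map $d^{(m)}:[0,1]^m\to\{v\in\R^m: v_1+\dots+v_m=\delta_1+\dots+\delta_m\}$ whose $j$-th component is strictly decreasing in $a_j$ and strictly increasing in every other $a_k$, with $d^{(m)}(0)=d^{(m)}(1,\dots,1)=(\delta_1,\dots,\delta_m)$ — for $m=n-1$ this is the map $d$ above. As $a_m\mapsto d^{(m)}_m(a_1,\dots,a_{m-1},a_m)$ is continuous and strictly decreasing, and as monotonicity in the remaining variables with the boundary values gives $d^{(m)}_m(a_1,\dots,a_{m-1},1)\le\delta_m\le d^{(m)}_m(a_1,\dots,a_{m-1},0)$, the equation $d^{(m)}_m=\delta_m$ has a unique solution $a_m=C_m(a_1,\dots,a_{m-1})$, with $C_m$ continuous, strictly increasing in each variable, and $C_m(0)=0$, $C_m(1,\dots,1)=1$. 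Putting $d^{(m-1)}_j(a_1,\dots,a_{m-1}):=d^{(m)}_j\big(a_1,\dots,a_{m-1},C_m(a_1,\dots,a_{m-1})\big)$, $j<m$, the constraint $v_1+\dots+v_m\equiv\mathrm{const}$ forces $d^{(m-1)}$ to have the same structure in one fewer variable with the same boundary values, so the induction runs down to $m=2$, where $a_2=C_2(a_1)$ and $a_1\in[0,1]$ stays free.

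It remains to assemble and verify. Set $\theta:=a_1$, $a_1(\theta):=\theta$, $a_m(\theta):=C_m\big(a_1(\theta),\dots,a_{m-1}(\theta)\big)$ for $m=2,\dots,n-1$, and $a_0(\theta):=a_0\big(a_1(\theta),\dots,a_{n-1}(\theta)\big)$. Then $\theta\mapsto a(\theta)$ is continuous, each $a_m(\theta)$ with $m\ge1$ is strictly increasing (a composition of strictly increasing maps) and $a_0(\theta)$ is non-decreasing, which is (m1); the endpoint values of the $C_m$ and (n1) give $a(0)=(0,\dots,0)$ and $a(1)=(1,\dots,1)$, which is (m2); along the path $d_m=\delta_m$ for $m=n-1,\dots,2$ by construction, so also $d_1=(s_n-s_1)-\sum_{m\ge2}\delta_m=\delta_1$, whence $x_j(a(\theta))=s_j$, i.e.\ $\xi_j(a(\theta))=\sigma_j$ for all $j$ (trivially for $j\in\{1,n\}$), which is (m3); and $\lambda(a(\theta))=\lambda\big(a_0(a'(\theta)),a'(\theta)\big)$ is, by Lemma~\ref{LM_monotonic}, non-decreasing in each $a_k$ and hence non-decreasing in $\theta$, which is (m4).
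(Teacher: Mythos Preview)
Your proof is correct and follows essentially the same route as the paper: reduce to the $(n-1)$-parameter family $h_{a'}=g_{(a_0(a'),a')}$, use Lemma~\ref{LM_Loewner-type} to establish that each half-plane gap $\ell_k(a')=H(\xi_{k+1})-H(\xi_k)$ is strictly decreasing in $a_k$ and strictly increasing in the other variables, and then recursively eliminate $a_{n-1},a_{n-2},\dots,a_2$ by solving $\ell_m=H(\sigma_{m+1})-H(\sigma_m)$. Your factorisation $\tilde\psi=\psi^{(2)}\circ\psi^{(1)}$ is a welcome clarification --- it makes explicit why $\tilde\psi\in\mathcal C(L'_j)$ for $j\neq k$, a hypothesis of Lemma~\ref{LM_Loewner-type} that the paper invokes without spelling out.
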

\begin{proof}
Consider the family $(h_{a'})_{a'\in[0,1]^{n-1}}$, defined by $h_{a'}:=g_{(a_0(a'),a')}$, where $a'\mapsto a_0(a')$ is the map defined in Lemma~\ref{LM_monotonic}.

Given $a',b'\in[0,1]^{n-1}$ with $a_j\le b_j$ for $j=1,\ldots,n$, consider the map ${\psi_{a',b'}:=h^{-1}_{b'}\circ h_{a'}}$. The points ${\tau=1}$ and $\xi_j(a'):=h_{a'}^{-1}(\sigma_j)$, $j=1,\ldots,n$, are regular contact points of~$\psi_{a',b'}$, with $\psi_{a',b'}(1)=1$ and $\psi_{a',b'}(\xi_j(a'))=\xi_j(b')$, see Remark~\ref{RM_contact-from-LCh}.  In particular, $$\xi_1(a')=\xi_1(b')=\sigma_1,\qquad \xi_n(a')=\xi_n(b')=\sigma_n$$ and hence $\psi\in\UF[1,\sigma_1,\sigma_2]$. Taking into account  that  $$h'_{a'}(1)=\lambda(a_0(a'),a')\le \lambda(a_0(b'),b')=h'_{b'}(1)$$ by Lemma~\ref{LM_monotonic}, we conclude that  $\psi_{a',b'}\in\Uone[\sigma_1,\sigma_n]$. Moreover, thanks to condition~(g3), $\psi_{a',b'}\neq\id_\UD$ unless $a'=b'$.

Applying Lemma~\ref{LM_Loewner-type} to the map $\psi:=\psi_{a',b'}$ with regular contact points $\xi_j:=\xi_j(a')$,  we see that for any ${k=1,\ldots,n-1}$, the function $$a'=(a_1,\ldots,a_{n-1})\mapsto \ell_k(a'):=H(\xi_{k+1}(a'))-H(\xi_k(a'))$$ is strictly decreasing in $a_k$ and strictly increasing in each $a_j$ with $j\neq k$. Moreover, by~(g7), this function is separately continuous in each variable, which in view of monotonicity, implies the joint continuity, see, e.g.~\cite[Remark~5.7]{Gumenyuk_parametric}.

 To prove the lemma, it is sufficient to construct a continuous map $$[0,1]\ni \theta\mapsto a'(\theta)=\big(a_1(\theta),\ldots,a_{n-1}(\theta)\big)\in[0,1]^{n-1}$$ satisfying the following conditions:
 \begin{itemize}
 \item[(p1)] all the components $\theta\mapsto a_j(\theta)$, ${j=1,\ldots,n-1}$, are strictly increasing;
 \item[(p2)] $a'(0)=(0,\ldots,0)$, $a'(1)=(1,\ldots,1)$, and
 \item[(p3)] the maps $\theta\mapsto\ell_j(a'(\theta))$, $j=1,\ldots,n-1$, are constant.
 \end{itemize}

The construction is based on an recursive procedure. First we will find a continuous function $[0,1]^{n-2}\ni a'':=(a_1,\ldots,a_{n-2})\mapsto a_{n-1}(a'')\in[0,1]$, $a_{n-1}(0,\ldots,0)=0$, $a_{n-1}(1,\ldots,1)=1$, strictly increasing in each variable and such that $\ell_{n-1}(a'',a_{n-1}(a''))$ is constant. Namely, for any $a''\in[0,1]^{n-2}$ we set $a_{n-1}(a'')$ to be a solution to
$$
\ell_{n-1}(a'',a_{n-1})=H(\sigma_n)-H(\sigma_{n-1}),
$$
which exists and unique thanks to continuity and monotonicity of $\ell_{n-1}$ in $a_{n-1}$ and to the fact that by monotonicity in~$a_1$,\ldots,$a_{n-2}$ we have
 \begin{align}
  \label{EQ_align1}
 \ell_{n-1}(a'',0)\ge\ell_{n-1}(0,\ldots,0)&={H(\sigma_n)-H(\sigma_{n-1})}\quad \text{and}\\
   \label{EQ_align2}
 \ell_{n-1}(a'',1)\le\ell_{n-1}(1,\ldots,1)&={H(\sigma_n)-H(\sigma_{n-1})}.
 \end{align}
 The required properties of continuity and monotonicity of $a''\mapsto a_{n-1}(a'')$ follow from continuity and monotonicity of~$\ell_{n-1}$. Finally, using again \eqref{EQ_align1} and~\eqref{EQ_align2} one immediately  obtains equalities ${a_{n-1}(0,\ldots,0)=0}$ and ${a_{n-1}(1,\ldots,1)=1}$.

For $n=3$ the proof can be now completed by taking $a(\theta):=\big(a_0(\theta,a_2(\theta)),\theta,a_2(\theta)\big)$ for all~$\theta\in[0,1]$. So suppose that ${n>3}$. It is easy to see that for any $k=1,\ldots,n-2$, the function  $\ell^1_k(a''):=\ell_k(a'',a_{n-1}(a''))$, defined for all $a''=(a_1,\ldots,a_{n-2})\in [0,1]^{n-2}$, is continuous and strictly increasing in  each $a_j$ with $j\neq k$. Since
$$
\sum_{j=1}^{n-2}\ell_j^1(a'')=H(\sigma_n)-H(\sigma_1)-\ell_{n-1}\big(a''\!,a_{n-1}(a'')\big)=H(\sigma_{n-1})-H(\sigma_1),
$$
it follows immediately that $\ell^1_k$ is strictly decreasing in~$a_k$. Furthermore,
\begin{align*}
\ell_{n-2}^1(0,\ldots,0)&=\ell_{n-2}(0,\ldots,0,0)={H(\sigma_{n-1})-H(\sigma_{n-2})}\quad\text{and}\\ \ell_{n-2}^1(1,\ldots,1)&=\ell_{n-2}(1,\ldots,1,1)={H(\sigma_{n-1})-H(\sigma_{n-2})}.
\end{align*}
Therefore, the above argument can be applied again with $\ell_{n-1}$ replaced by $\ell^1_{n-2}$. In other words,  we exclude one more parameter by finding $a_{n-2}=a_{n-2}(a''')\in[0,1]$, $a''':=(a_1,\ldots,a_{n-3})$, that solves the equation $\ell^1_{n-2}\big(a''',a_{n-2}\big)={H(\sigma_{n-1})-H(\sigma_{n-2})}.$

Repeating this procedure suitable number of times, we end up with a continuous map $$[0,1]\ni\theta\mapsto a'(\theta)=\big(\theta,a_2(\theta),\ldots,a_{n-1}(\theta)\big)\in[0,1]^{n-1}$$  that satisfies (p1)\,--\,(p3).
Using conditions (n1)\,--\,(n3) in Lemma~\ref{LM_monotonic}, we see that the map~$\theta\mapsto a(\theta):=\big(a_0(a'(\theta)),a'(\theta)\big)$ satisfies~(m1)\,--\,(m4). The proof is now complete.
\end{proof}

\section{Proof of main results}\label{S_proofs}
\subsection{Proof of Theorem~\ref{TH_embedd}}
Without loss of generality we may assume that $\tau=1$. Furthermore, for $\phi=\id_\UD$ the statement of the theorem is trivial, so will will suppose that~$\phi\neq\id_\UD$.

Now we can apply Lemma~\ref{LM_multiparam_family} to construct the multi-parameter family $(g_a)_{a\in[0,1]^n}$. Next using Lemma~\ref{LM_pass-to-one-parameter}, we obtain the continuous map $[0,1]\ni\theta\mapsto a=\big(a_0(\theta),\ldots,a_{n-1}(\theta)\big)$. By (m1) and~(m2), the function $$\Lambda(\theta):=\frac{1}{n}\sum_{j=0}^{n-1}a_j(\theta),\qquad \theta\in[0,1],$$ is a continuous strictly increasing map of~$[0,1]$ onto itself. Let $[0,1]\ni t\mapsto\theta(t)$ be the inverse of the function~$\Lambda$, which we extend to $[0,+\infty)$ by setting~$\theta(t):=1$ for all~$t>1$. Clearly, the functions $t\mapsto a_j(\theta(t))$, $j=0,\ldots,n-1$, are non-decreasing and $$|a_j(\theta(t_2))-a_j(\theta(t_1))|\le\sum_{k=0}^{n-1}|a_k(\theta(t_2))-a_k(\theta(t_1))|\le|t_2-t_1|$$ for any~$t_1,t_2\ge0$ and any $j=0,\ldots,n-1$. Therefore, by assertion~(g6) of Lemma~\ref{LM_multiparam_family}, the functions $$\varphi_{s,t}:=g_{a(\theta(t))}^{-1}\circ g^{\phantom{-1}}_{a(\theta(t))},\qquad t\ge s\ge0,$$ form an evolution family. Moreover, since $a(\theta(0))=(0,\ldots,0)$ and $a(\theta(1))=(1,\ldots,1)$, by assertion~(g2) of Lemma~\ref{LM_multiparam_family} we have $\varphi_{0,1}=\phi$.

 Bearing in mind Remarks~\ref{RM_ChainRule} and~\ref{RM_contact-from-LCh}, it remains to mention that $(\varphi_{s,t})\subset \UF[F]$ by~(m3) and that $(\varphi_{s,t})\subset\Uone$ thanks to~(m4).
\proofbox

\subsection{Proof of Theorem~\ref{TH_main}}
It is sufficient to prove the theorem for $\mathfrak S=\UF[F]$ or for ${\mathfrak S=\Ut\tau[F]}$, where $F\subset\UC$ is a finite set with $\Card(F)\ge3$ and $\tau\in\UC\setminus F$, as all other cases are already covered by~\cite[Theorem~1]{Gumenyuk_parametric}.

Fix any Herglotz vector field~$G$. By the very definition, for a.e.~$s\ge0$, $G(\cdot,s)$ is an infinitesimal generator. By~\cite[Theorem~1]{CDP2}, the one-parameter semigroup $(\phi_{t}^{G(\cdot,s)})$ generated by~$G(\cdot,s)$ has a boundary regular fixed point at~$\sigma\in\UC$ if and only if there exists a finite angular limit
   \begin{equation}\label{EQ_anglim-for-G}
      \lambda(\sigma,s):=\anglim_{z\to\sigma}\frac{G(z,s)}{z-\sigma}.
    \end{equation}

Now let $\mathcal M_{\mathfrak S}$ be the set of all Herglotz vector fields~$G$ satisfying the following conditions:
\begin{mylist}
\item[(a)] for a.e. $s\ge0$, we have $(\phi_{t}^{G(\cdot,s)})\subset\mathfrak S$;
\item[(b)] for every $\sigma\in F$, the function $\lambda(\sigma,\cdot)$ is locally integrable on~$[0,+\infty)$.
\end{mylist}

First of all, let us show that $\mathcal M_{\mathfrak S}$ is a convex cone. Let $G_1,G_2\in\mathcal M_{\mathfrak S}\setminus\{0\}$ and let~$G_3\not\equiv0$ be a linear combination of $G_1$ and $G_2$ with non-negative coefficients. Then $G_3$ is an infinitesimal generator, see, e.g. \cite[Corollary~1.4.15 on p.\,108]{Abate}. Moreover, if for some $s\ge0$, $\tau$ is the Denjoy\,--\,Wolff point of both $(\phi_{t}^{G_1(\cdot,s)})$ and $(\phi_{t}^{G_2(\cdot,s)})$, then thanks to the Berkson\,--\,Porta formula, see Remark~\ref{RM_Berkson-Porta}, $\tau$ is also the Denjoy\,--\,Wolff point of $(\phi_{t}^{G_3(\cdot,s)})$.
Finally, since  the angular limit~\eqref{EQ_anglim-for-G} exists finitely and satisfies~(b) both for $G:=G_1$ and for $G:=G_2$, this is the case for~$G:=G_3$ as well. Therefore, $G_3\in\mathcal M_{\mathfrak S}$.

It remains to show that $\mathcal M_{\mathfrak S}$ meets conditions~(i)\,--\,(iii) in Theorem~\ref{TH_main}. Firstly, condition~(i) simply coincides with~(a). Furthermore, in view of \cite[Theorem~1]{CDP2} and \cite[Theorem~1.1]{BRFPLoewTheory}, conditions~(a) and (b) imply that the evolution family~$(\varphi_{s,t}^G)$ generated by~$G$ satisfies $(\varphi_{s,t}^G)\subset\UF[F]$. This proves (iii) in case $\mathfrak S=\UF[F]$. Notice that if $\mathfrak S=\Ut\tau[F]$, then in view of the Berkson\,--\,Porta formula, see Remark~\ref{RM_Berkson-Porta}, from~(a) it follows that $G(z,t)=(\tau-z)(1-\overline\tau z)p_t(z)$ for all~$z\in\UD$ and a.e.~$t\ge0$, where $(p_t)_{t\ge0}$ is a family of holomorphic functions with non-negative real part. By \cite[Corollary~7.2]{BCM1}, the latter equality implies that $(\varphi_{s,t}^G)\subset\Ut\tau$. Hence (iii) also holds for the case $\mathfrak S=\Ut\tau[F]$.

It remains show that $\mathcal M_{\mathfrak S}$ satisfies~(ii). Let $\phi\in\mathfrak S\setminus\{\id_\UD\}$. We have to find  $G\in\mathcal M_{\mathfrak S}$ whose evolution family~$(\varphi_{s,t})=(\varphi^G_{s,t})$ satisfies $\varphi_{0,1}=\phi$. To this end we first construct  a certain evolution family~$(\varphi_{s,t})$ and then show that its Herglotz vector field is a suitable candidate for~$G$. Let $\xi$ be the Denjoy\,--\,Wolff point of~$\phi$. Of course, $\xi=\tau$ if $\mathfrak S=\Ut\tau[F]$.

\vskip.7ex
\noindent{\bf Claim:} \textit{there exists an evolution family $(\varphi_{s,t})\subset\UF[F]\cap\Ut\xi$ such that $\varphi_{0,1}=\phi$.}
\vskip.7ex

\noindent Indeed, if $\xi\in\UC\setminus F$, then the Claim follows readily from Theorem~\ref{TH_embedd} applied with $\tau$ replaced with~$\xi$. Similarly, if $\xi\in F$ and $\Card(F)>3$, then we should apply Theorem~\ref{TH_embedd} with $\tau$ and~$F$ replaced with $\xi$ and $F\setminus\{\xi\}$, respectively. In the remaining cases, i.e. if $\xi\in F$ and $\Card(F)=3$ or if $\xi\in\UD$,  the proof of the Claim is contained in the proof of \cite[Theorem~1]{Gumenyuk_parametric}, again with $\xi$ and $F\setminus\{\xi\}$ substituted for $\tau$ and $F$, respectively.

\vskip.7ex
Now by the above Claim, there exists an evolution family $(\varphi_{s,t})\subset\mathfrak S$ with $\varphi_{0,1}=\phi$. By \cite[Theorem~1]{CDP2} and \cite[Theorem~1.1]{BRFPLoewTheory}, the Herglotz vector field~$G$ of~$(\varphi_{s,t})$ belongs to~$\mathcal M_{\UF[F]}$. This completes the proof for $\mathfrak S=\UF[F]$, while in case $\mathfrak S=\Ut\tau[F]$ it remains to notice that $(\phi_t^{G(\cdot,s)})\subset \Ut\tau$ for a.e.~$s\ge0$ by~\cite[Theorem~6.7]{BCM1}.
\proofbox


\begin{thebibliography}{99}
%
\bibitem{Abate}M.\,Abate, {\it Iteration theory of holomorphic maps on taut manifolds},
Research and Lecture Notes in Mathematics. Complex Analysis and Geometry,
Mediterranean, Rende, 1989.

\bibitem{Aleksandrov} I.\,A.\,Aleksandrov,
\textit{Parametric continuations in the theory of univalent functions}
(Russian), Izdat. ``Nauka'', Moscow, 1976. \hbox{MR0480952}

\bibitem{Erlend} L. Arosio, F. Bracci\ and\ E.\,F.~Wold, {\it Solving the Loewner PDE in complete hyperbolic starlike domains of $\mathbb{C}^N$}, Adv. Math. {\bf 242} (2013). MR3055993

\bibitem{MilnVas}J. M. Anderson\ and\ A. Vasil'ev, {\it Lower Schwarz-Pick estimates and angular derivatives}, Ann. Acad. Sci. Fenn. Math. {\bf 33} (2008), no.~1, 101--110. MR2386840 (2009b:30014)

\bibitem{Berkson-Porta}E. Berkson and H. Porta, \textit{Semigroups of
holomorphic functions and composition operators,} Michigan Math. J. \textbf{25} (1978), 101--115.

\bibitem{Bhatia} R. Bhatia, {\it Matrix analysis}, Graduate Texts in Mathematics, 169, Springer, New York, 1997. MR1477662

\bibitem{AlexanderClarkMeasures}F. Bracci, M. D. Contreras\ and\ S. D\'\i az-Madrigal, {\it Aleksandrov-Clark measures and semigroups of analytic functions in the unit disc}, Ann. Acad. Sci. Fenn. Math. {\bf 33} (2008), no.~1, 231--240. MR2386848 (2009c:30097)

\bibitem{BCM1} F. Bracci, M. Contreras, S. D\'iaz-Madrigal, {\it Evolution Families and the Loewner Equation I: the unit disc}. J. Reine Angew. Math. (Crelle's Journal), {\bf 672} (2012), 1--37.

\bibitem{BCM2} \bysame, {\it Evolution Families and the Loewner Equation II: complex
hyperbolic manifolds}. Math. Ann. {\bf 344} (2009), No.\,4, 947--962.

\bibitem{BRFPLoewTheory} F.~Bracci, M.\,D. Contreras, S.~D\'\i az-Madrigal, and P.~Gumenyuk,
 {\it Boundary regular fixed points in Loewner theory}, Ann. Mat. Pura Appl. (4) {\bf 194} (2015), no.~1, 221--245.

\bibitem{BCMV_survey} F.~Bracci, M.\,D. Contreras, S.~D\'\i az-Madrigal, and A.~Vasil'ev, {\it Classical and stochastic L\"owner-Kufarev equations}, in Harmonic and complex analysis and its applications, 39--134, Trends Math, Birkh\"auser/Springer, Cham, 2014. MR3203100



\bibitem{deBranges} L. de Branges, {\it A proof of the Bieberbach conjecture},
Acta Math. {\bf 154} (1985), no.~1-2, 137--152. MR0772434 (86h:30026)



\bibitem{ContrMadrigal:AnaFlows} M.\,D. Contreras and S.~D\'{\i}az-Madrigal,
\emph{Analytic flows in the unit disk: angular derivatives and
boundary fixed points}, Pacific J. Math. \textbf{222} (2005), 253--286.

\bibitem{SMP} M.\,D. Contreras, S.~D\'\i az-Madrigal, and P.~Gumenyuk, \textit{Loewner chains in the unit disk}. Rev. Mat. Iberoam. \textbf{26} (2010), 975--1012.

\bibitem{CAOT}
\bysame, \emph{Geometry behind chordal Loewner chains}, Complex Anal. Oper. Theory {\bf 4} (2010), no.~3, 541--587. MR2719792

\bibitem{CDP1}M. D. Contreras, S. D\'\i az-Madrigal\ and\ C. Pommerenke, {\it Fixed points and boundary behaviour of the Koenigs function}, Ann. Acad. Sci. Fenn. Math. {\bf 29} (2004), no.~2, 471--488. MR2097244

\bibitem{CDP2} \bysame, {\it On boundary critical points for semigroups of analytic functions}, Math. Scand. {\bf 98} (2006), no.~1, 125--142. MR2221548


\bibitem{CowenPommerenke} C. C. Cowen\ and\ C. Pommerenke, {\it Inequalities for the angular derivative of an analytic function in the unit disk}, J. London Math. Soc. (2) {\bf 26} (1982), no.~2, 271--289. MR0675170



\bibitem{GESR} M. Elin, V. Goryainov,  S.~Reich, D.~Shoikhet, {\it Fractional iteration and functional equations for functions analytic in the unit disk}, Comput. Methods Funct. Theory {\bf 2} (2002)no.~2, [On table of contents: 2004]. MR2038126


\bibitem{Frolova} A. Frolova, M. Levenshtein, D. Shoikhet, A. Vasil'ev, {\it Boundary distortion estimates for holomorphic maps}, Complex Anal. Oper. Theory {\bf 8} (2014), no.~5, 1129--1149. MR3208806



\bibitem{GoryainovFr}V. V. Gorya\u\i nov, {\it Fractional iterates of functions that are analytic in the unit disk with given fixed points} Mat. Sb. {\bf 182} (1991), no. 9, 1281--1299; translation in Math. USSR-Sb. {\bf 74} (1993), no.~1, 29--46. MR1133569 (92m:30049)

\bibitem{Goryainov}V.V. Goryainov, \textit{Semigroups of conformal mappings,}
Mat. Sb. (N.S.) {\bf 129(171)} (1986), no.~4, 451--472 (Russian); translation
in Math. USSR Sbornik \textbf{57} (1987), 463--483.

\bibitem{Goryainov1996}
\bysame, \textit{Evolution families of analytic functions and time-inhomogeneous
Markov branching processes}, Dokl. Akad. Nauk \textbf{347}(1996), No.\,6,
729--731; translation in Dokl. Math. \textbf{53}(1996), No.\,2, 256--258.

\bibitem{GoryainovDiff} \bysame, \textit{Evolution families of conformal mappings with fixed points.} (Russian. English summary). Z\'\i{}rnik Prats' Instytutu Matematyky NAN Ukrayiny. National Academy of Sciences of Ukraine (ISSN 1815-2910) {\bf 10} (2013), No.4-5, 424-431. Zbl~1289.30024

\bibitem{GoryainovObzor}\bysame, \textit{Semigroups of analytic functions in analysis and applications}, Uspekhi Mat. Nauk {\bf 67} (2012), no. 6(408), 5--52; translation in Russian Math. Surveys {\bf 67} (2012), no.~6, 975--1021

\bibitem{GoryainovBRFP} \bysame, \textit{Evolution families of conformal mappings with fixed points and the L\"owner-Kufarev equation}, Mat. Sb. {\bf 206} (2015), no. 1, 39--68; translation in Sb. Math. {\bf 206} (2015), no.~1-2, 33-60.

\bibitem{Goryainov-Ba}V.\,V. Goryainov and I. Ba, \textit{Semigroups of
conformal mappings of the upper half-plane into itself with hydrodynamic
normalization at infinity,} Ukrainian Math. J. \textbf{44} (1992), 1209--1217.

\bibitem{Goryainov-Kudryavtseva} V.\,V.~Goryainov, O.\,S.~Kudryavtseva, {\it One-parameter semigroups of analytic functions, fixed points and the Koenigs function}, Mat. Sb. \textbf{202} (2011), No.\,7, 43--74 (Russian); translation in Sbornik: Mathematics,
\textbf{202} (2011), No.\,7-8, 971--1000.


\bibitem{Gumenyuk_parametric}P.~Gumenyuk, {\it Parametric representation of univalent functions with boundary regular fixed points}. Preprint, 2016. Available at ArXiv:1603.04043.


\bibitem{JuliaSeb} J. Koch\ and\ S. Schlei\ss{}inger, {\it Value ranges of univalent self-mappings of the unit disc}, J. Math. Anal. Appl. {\bf 433} (2016) no.~2. MR3398791


\bibitem{Loewner}K. L\"{o}wner, {\it Untersuchungen \"{u}ber schlichte
konforme Abbildungen des Einheitskreises}, Math. Ann. \textbf{89} (1923),
103--121.

\bibitem{LoewnerMatrices} C. Loewner, \textit{On totally positive matrices,} Math. Z. {\bf 63} (1955), 338--340. MR0073657

\bibitem{LoewnerSeminar} \bysame, {\it Seminars on analytic functions}, Institute for Advanced Study, Princeton, New Jersey, vol.\,1, (1957). Available at \verb+http://babel.hathitrust.org+

\bibitem{LoewnerMonotone} \bysame, {\it On generation of monotonic transformations of higher order by infinitesimal transformations}, J. Analyse Math. {\bf 11} (1963), 189--206. MR0214711

\bibitem{Poggi} P. Poggi-Corradini, {\it Canonical conjugations at fixed points other than the Denjoy-Wolff point}, Ann. Acad. Sci. Fenn. Math. {\bf 25} (2000), no.~2, 487--499. MR1762433 (2001f:30033)

\bibitem{Pommerenke}Ch.\,Pommerenke, {\it Univalent functions}.
With a chapter on quadratic differentials by Gerd Jensen, Vandenhoeck \&
Ruprecht, G\"{o}ttingen, 1975.

\bibitem{Pommerenke2}\bysame, {\it Boundary behaviour of conformal mappings}. Springer-Verlag, 1992.

\bibitem{DVK}D. Prokhorov\ and\ K. Samsonova, {\it Value range of solutions to the chordal Loewner equation}, J. Math. Anal. Appl. {\bf 428} (2015) no.~2. MR3334955

\bibitem{PommVas} C. Pommerenke\ and\ A. Vasil'ev, {\it Angular derivatives of bounded univalent functions and extremal partitions of the unit disk}, Pacific J. Math. {\bf 206} (2002), no.~2, 425--450. MR1926785 (2003i:30024)



\bibitem{OliverSeb} O. Roth\ and\ S. Schlei\ss{}inger, {\it Rogosinski's lemma for univalent functions, hyperbolic Archimedean spirals and the Loewner equation}, Bull. Lond. Math. Soc. {\bf 46} (2014) no.~5. MR3262210


\bibitem{Sarason} D. Sarason, {\it Sub-Hardy Hilbert spaces in the unit disk}.
University of Arkansas Lecture Notes in the Mathematical Sciences, 10. A Wiley-Interscience Publication. John Wiley \& Sons, Inc., New York, 1994. MR1289670 (96k:46039)

\bibitem{Shoikhet:2001}
D.~Shoikhet, \emph{Semigroups in geometrical function theory}, Kluwer Academic Publishers, 2001.
\end{thebibliography}
\end{document}